\newtheorem{theor}{Theorem}
\newtheorem{claim}[theor]{Claim}
\newtheorem*{conjecture}{Conjecture}
\newtheorem{cor}[theor]{Corollary}
\theoremstyle{definition}
\newtheorem{example}{Example}
\theoremstyle{remark}
\newtheorem{rem}{Remark}
\newcommand{\cev}[1]{\reflectbox{\ensuremath{\vec{\reflectbox{\ensuremath{#1}}}}}}
\newcommand{\circL}{\ \cev{\circ}\ }
\newcommand{\BBR}{\mathbb{R}}
\newcommand{\BBQ}{\mathbb{Q}}
\newcommand{\cF}{\mathcal{F}}
\newcommand{\bcF}{\boldsymbol{\cF}}
\newcommand{\cP}{\mathcal{P}}\newcommand{\cQ}{\mathcal{Q}}
\newcommand{\dd}{\partial}
\newcommand{\Id}{{\mathrm d}}
\DeclareMathOperator{\Aut}{Aut}
\DeclareMathOperator{\Assoc}{Assoc}
\DeclareMathOperator{\Jac}{Jac}
\newcommand{\poly}{{ \text{\textrm{\textup{poly}}} }}
\newcommand{\schouten}[1]{\lshad {#1} \rshad}
\DeclareMathOperator{\Star}{Star}
\newcommand{\lshad}{[\![}
\newcommand{\rshad}{]\!]}
\newcommand{\by}[1]{\textit{{#1}}}
\newcommand{\jour}[1]{\textit{{#1}}}
\newcommand{\vol}[1]{\textbf{{#1}}}
\newcommand{\book}[1]{\textrm{{#1}}}
\title[Formality morphism as the mechanism of $\star$-\/product associativity]{Formality morphism as the mechanism of\\[3pt] $\star$-\/product associativity: how it works}
\author[R.~Buring]{Ricardo Buring${}^{\text{1)}}$}
\thanks{${}^{\text{1)}}$\:\textit{Address}: 
Institut f\"ur Mathematik, 
Johannes Gutenberg\/--\/Uni\-ver\-si\-t\"at,
Staudingerweg~9, 
\mbox{D-\/55128} Mainz, Germany.
\quad 
\textit{E-mail} (corresponding author): \texttt{rburing\symbol{"40}uni-mainz.de}
}
\author[A.\,V.\,Kiselev]{Arthemy V. Kiselev${}^{\text{2)}}$}
\thanks{${}^{\text{2)}}$\:\textit{Address}: Bernoulli Institute for Mathematics,
Computer Science and Artificial Intelligence, 
University of Groningen,
P.O.Box~407, 9700\,AK Groningen, 
The Netherlands.\quad \textit{E-mail}:\ \texttt{A.V.Kiselev\symbol{"40}rug.nl}
}
\subjclass[2010]{
05C22, 
16E45, 
53D55, 
secondary
53D17, 
68R10, 
81R60. 
}
\date{1 July 2019}
\dedicatory{
`Symmetries \textsl{\&}\ integrability of equations of mathematical physics',\\
\textup{(22--24}~December \textup{2018}, IM NASU Kiev, Ukraine\textup{)}%
}
\begin{document}
\begin{abstract}
The formality morphism~$\boldsymbol{\cF}=\{\cF_n$, $n\geqslant1\}$
in Kontsevich's deformation quantization
is a collection of maps from tensor powers of
the differential graded Lie algebra (dgLa) of multivector fields to the 
dgLa of polydifferential operators on finite\/-\/dimensional affine manifolds. Not a Lie algebra morphism by its term~$\cF_1$ alone, 
the entire set $\boldsymbol{\cF}$ 
is an $L_\infty$-\/morphism instead. 
It 
induces a map of the Maurer\/--\/Cartan elements, 
taking Poisson bi\/-\/vectors to deformations $\mu_A\mapsto\star_{A[[\hbar]]}$ of the usual multiplication 
of functions into associative noncommutative $\star$-\/products of power series in~$\hbar$.
The associativity of $\star$-\/products is then realized, in terms of the Kontsevich graphs which encode polydifferential operators, by differential consequences of the Jacobi identity.
The aim of this paper is to illustrate the work of this algebraic mechanism for the Kontsevich $\star$-\/products 
(in particular, with harmonic propagators). We inspect how the Kontsevich weights are correlated 
for the orgraphs which occur in the associator for~$\star$ and in its expansion using Leibniz graphs with the Jacobi identity at a vertex.
\end{abstract}
\maketitle

\subsection*{Introduction}
The Kontsevich formality morphism~$\bcF$ relates two differential graded Lie algebras (dgLa). Its domain of definition is the shifted\/-\/graded vector space $T_\poly^{\downarrow[1]}(M^r)$ of multivectors on an affine real finite\/-\/dimensional manifold~$M^r$; the graded Lie algebra structure is the Schouten bracket~$\lshad\,,\,\rshad$ and the differential is set to (the bracket with) zero by definition.
On the other hand, the target space of the formality morphism~$\bcF$ is the graded vector space $D_\poly^{\downarrow[1]}(M^r)$ of polydifferential operators on~$M^r$; the graded Lie algebra structure is the Gerstenhaber bracket~$[\,,\,]_G$ and the differential $\Id_H=[\mu_A,\cdot]$ is induced by using the multiplication~$\mu_A$ in the algebra $A\mathrel{{:}{=}} C^\infty(M^r)$ of functions on~$M^r$.
It is readily seen that w.r.t.\ the above notation, Poisson bi\/-\/vectors~$\cP$ satisfying the Jacobi identity $\lshad\cP,\cP\rshad=0$ on~$M^r$ are the Maurer\/--\/Cartan elements (indeed, $(d\equiv0)(\cP)+\tfrac{1}{2}\lshad\cP,\cP\rshad=0$).  Likewise, for a (non)commutative star\/-\/product $\star=\mu_{A[[\hbar]]}+\langle\text{tail}\mathrel{{=}{:}} B \rangle$, which deforms the usual multiplication $\mu=\mu_{A[[\hbar]]}$ in $A[[\hbar]]=C^\infty(M^r)\mathbin{{\otimes}_\BBR} \BBR[[\hbar]]$ by a tail~$B$ w.r.t.\ a formal parameter~$\hbar$, the requirement that $\star$~be associative again is the Maurer\/--\/Cartan equation,
\[
[\mu,B]_G+\tfrac{1}{2}[B,B]_G=0 \qquad \Longleftrightarrow \qquad
\tfrac{1}{2} [\mu+B,\mu+B]_G=0.
\]
Here, the leading order equality $[\mu,\mu]_G=0$ expresses the given associativity of the product~$\mu$ itself.

The Kontsevich formality mapping $\bcF=\{\cF_n\colon T_\poly^{\otimes n}\to D_\poly$, $n\geqslant 1\}$ in~\cite{Ascona96,KontsevichFormality} is an $L_\infty$-\/morphism which induces a map 
that takes Maurer\/--\/Cartan elements~$\cP$, i.e.\ formal Poisson bi\/-\/vectors~$\tilde{\cP}=\hbar\cP+\bar{o}(\hbar)$ on~$M^r$, to Maurer\/--\/Cartan elements\footnote{%
In fact, the morphism~$\bcF$ is a quasi\/-\/isomorphism 
(see~\cite[Th.~6.3]{KontsevichFormality}),
inducing a bijection between the sets of gauge\/-\/equivalence classes of Maurer\/--\/Cartan elements.},
i.e.\ the tails~$B$ in solutions~$\star$ of the associativity equation on~$A[[\hbar]]$.

The theory required to build the Kontsevich map~$\bcF$ is standard, well reflected in the literature (see~\cite{Ascona96,KontsevichFormality}, as well as~\cite{CattaneoIndelicato,CattaneoKellerTorossian} and references therein); a proper choice of signs is analysed in~\cite{ArnalManchonMasmoudi,WillwacherCalaque}. 
The framework of homotopy Lie algebras and $L_\infty$-\/morphisms, introduced by Schlessinger\/--\/Stasheff~\cite{SchlessingerStasheff}, is available from~\cite{LadaStasheff}, cf.~\cite{CattaneoFelder2000} in the context of present paper.

So, the general fact of (existence of) factorization,
\begin{equation}\label{EqDiamondAssoc}
\Assoc(\star)(\cP)(f,g,h) = \Diamond\bigl(\cP,\lshad\cP,\cP\rshad\bigr)(f,g,h),\qquad f,g,h\in A[[\hbar]],
\end{equation}
is 
known to the expert community. Indeed, this factorization is immediate from the construction of $L_\infty$-\/morphism 
in~\cite[\S6.4]{KontsevichFormality}.
We shall inspect how this mechanism works in practice, i.e.\ how precisely the $\star$-\/product is made associative in its perturbative expansion whenever the bi\/-\/vector~$\cP$ is Poisson, thus satisfying the Jacobi identity $\Jac(\cP)\mathrel{{:}{=}}\tfrac{1}{2}\lshad\cP,\cP\rshad=0$. To the same extent as our paper~\cite{OrMorphism} justifies a similar factorization, $\lshad\cP,\cQ(\cP)\rshad=\Diamond\bigl(\cP,\lshad\cP,\cP\rshad\bigr)$, of the Poisson cocycle condition for universal deformations $\dot{\cP}=\cQ(\cP)$ of Poisson structures\footnote{Universal w.r.t.\ all Poisson brackets on all finite\/-\/dimensional affine manifolds, such infinitesimal deformations were pioneered in~\cite{Ascona96}; explicit examples of these flows $\dot{\cP}=\cQ(\cP)$ are given in~\cite{f16,sqs17,OrMorphism}.},
we presently motivate 
the findings in~\cite{sqs15} for $\star$ mod~$\bar{o}(\hbar^3)$, proceeding to the next order $\star$ mod~$\bar{o}(\hbar^4)$ from~\cite{cpp} (and higher orders, recently available from~\cite{BanksPanzerPym1812}).\footnote{Note that both the approaches --\,to noncommutative associative $\star$-\/products and deformations of Poisson structures\,-- rely on the same calculus of oriented graphs by Kontsevich~\cite{MKParisECM,
Ascona96,KontsevichFormality}.%
}
Let us emphasize that the theoretical constructions and algorithms (contained in the computer\/-\/assisted proof scheme under study and in the tools for graph weight calculation) would still work at arbitrarily high orders of expansion~$\star$ mod~$\bar{o}(\hbar^k)$ as~$k\to\infty$. 
Explicit factorization~\eqref{EqDiamondAssoc} up to~$\bar{o}(\hbar^k)$ helps us build the star\/-\/product~$\star$ mod~$\bar{o}(\hbar^k)$ by using a self\/-\/starting iterative process, 
because 
the Jacobi identity for~$\cP$ is the only obstruction to the associativity of~$\star$. 
Specifically, the Kontsevich weights of graphs on fewer vertices (yet with a number of edges 
such that they do not show up in the perturbative expansion of~$\star$)
dictate the coefficients of Leibniz orgraphs in operator~$\Diamond$ at higher orders in~$\hbar$. These weights in the r.-h.s.\ of~\eqref{EqDiamondAssoc} constrain the higher\/-\/order weights of the Kontsevich orgraphs in the expansion of $\star$-\/product 
itself.
This is important also in the context of a number\/-\/theoretic open problem about the (ir)rational value $(\text{const}\in\BBQ\setminus\{0\} )\cdot\zeta(3)^2/\pi^6+(\text{const}\in\BBQ)$ of a graph weight at~$\hbar^7$ in~$\star$ (see~\cite{FelderWillwacher2008} and~\cite{BanksPanzerPym1812}).

Our paper is structured as follows. First, we fix notation and recall some basic facts from relevant theory. Secondly, we provide three examples which illustrate the work of formality morphism in solving Eq.~\eqref{EqDiamondAssoc}. Specifically, we read the operators $\Diamond_k=\Diamond$ mod~$\bar{o}(\hbar^k)$ satisfying
\begin{equation}\tag{\ref{EqDiamondAssoc}${}'$}
\Assoc(\star)(\cP)(f,g,h)\ \text{mod}\:\bar{o}(\hbar^k)
= \Diamond_k\,\bigl(\cP,\lshad\cP,\cP\rshad\bigr)(f,g,h)
\end{equation}
at $k=2$, $3$, and~$4$. This corresponds to the expansions~$\star$ mod~$\bar{o}(\hbar^k)$ in~\cite{KontsevichFormality}, \cite{sqs15}, and~\cite{cpp}, respectively. One can then continue with $k=5,6$; these expansions are in~\cite{BanksPanzerPym1812}. Independently, one can probe such factorizations using other stable formality morphisms: for instance, the ones which correspond to a different star\/-\/product, the weights in which are determined by a logarithmic propagator instead of the harmonic 
one (see~\cite{RossiWillwacher2014}).

\section{Two differential graded Lie algebra structures}
\noindent%
Let $M^r$ be an $r$-\/dimensional affine real manifold (we set $\Bbbk = \mathbb{R}$ for simplicity).
In the algebra $A \mathrel{{:}{=}} C^\infty(M^r)$ of smooth functions, denote by~$\mu_A$ (or equivalently, by the dot~$\cdot$) the usual commutative, associative, bi\/-\/linear multiplication.
The space of formal power series in~$\hbar$ over~$A$ will be~$A[[\hbar]]$ and the $\hbar$-\/linear multiplication in it is~$\mu$ (instead of~$\mu_{A[[\hbar]]}$).
Consider two differential graded Lie algebra stuctures.
First, we have that the shifted\/-\/graded space $T_{\poly}^{\downarrow[1]}(M^r)$ of multivector fields on~$M^r$ is equipped with the shifted\/-\/graded skew\/-\/symmetric Schouten bracket $\lshad\,,\,\rshad$ (itself bi\/-\/linear by construction and satisfying the shifted\/-\/graded Jacobi identity); the differential is set to zero.
Secondly, the vector space $D_{\poly}^{\downarrow[1]}(M^r)$ of polydifferential operators (linear in each argument but not necessarily skew over the set of arguments or a derivation in any of them) is graded by using the number of arguments~$m$: by definition, let $\deg(\theta(m\text{ arguments})) \mathrel{{:}{=}} m-1$.
For instance, $\deg(\mu_A) = 1$.
The Lie algebra structure on $D_{\poly}^{\downarrow[1]}(M^r)$ is the Gerstenhaber bracket $[\,,\,]_G$; for two homogeneous operators~$\Phi_1$ and~$\Phi_2$ it equals $[\Phi_1,\Phi_2]_G = \Phi_1 \circL \Phi_2 - (-)^{\deg \Phi_1 \cdot \deg \Phi_2}\Phi_2 \circL \Phi_1$, where the directed, non\/-\/associative insertion product is, by definition \[ (\Phi_1 \circL \Phi_2)(a_0,\ldots,a_{k_1+k_2}) = \sum_{i=0}^{k_1} (-)^{ik_2} \Phi_1\bigl(a_0 \otimes \ldots \otimes a_{i-1} \otimes \Phi_2(a_i \otimes \ldots \otimes a_{i+k_2}) \otimes a_{i+k_2+1} \otimes \ldots \otimes a_{k_1+k_2}\bigr). \]
In the above, $\Phi_i\colon A^{\otimes(k_i+1)}\to A$ so that $a_j\in A$.
Like $\schouten{\cdot,\cdot}$, the Gerstenhaber bracket satisfies the shifted\/-\/graded Jacobi identity.
The Hochshild differential on~$D_{\poly}^{\downarrow[1]}(M^r)$ is $\Id_H = [\mu_A, \cdot]_G$; indeed, its square vanishes, $\Id_H^2 = 0$, due to the Jacobi identity for~$[\,,\,]_G$ into which one plugs the equality $[\mu_A, \mu_A]_G = 0$.

\begin{example}
The associativity of the product~$\mu_A$ in the algebra of functions~$A = C^\infty(M^r)$ is the statement that
\begin{multline*}
\mu_A^{(1)}(\mu_A^{(2)}(a_0,a_1),a_2) + (-1)^{(i=1)\cdot (\deg \mu_A = 1)} \mu_A^{(1)}(a_0,\mu_A^{(2)}(a_1,a_2)) 
\\
 -(-)^{ (\deg \mu_A^{(1)} = 1) \cdot (\deg \mu_A^{(2)} = 1) } \bigl\{ \mu_A^{(1)}(\mu_A^{(1)}(a_0,a_1),a_2) - \mu_A^{(2)}(a_0,\mu_A^{(1)}(a_1,a_2)) \bigr\} 
\\
= 2\bigl\{(a_0\cdot a_1)\cdot a_2 - a_0\cdot(a_1\cdot a_2)\bigr\} = 0.
\end{multline*}
So, the associator $\Assoc(\mu_A)(a_0,a_1,a_2) = \frac{1}{2}[\mu_A,\mu_A]_G\,(a_0,a_1,a_2) = 0$ for any $a_j\in A$.
\end{example}

\section{The Maurer\/--\/Cartan elements}
\noindent%
In every 
differential graded Lie algebra with a Lie bracket~$[\,,\,]$, the Maurer\/--\/Cartan (MC) elements are solutions of degree~$1$ for the Maurer\/--\/Cartan equation
\begin{equation}
\label{EqMC}
\Id\alpha + \tfrac{1}{2}[\alpha, \alpha]=0,
\end{equation}
where $\Id$ is the differential (equal, we recall, to zero identically on $T_{\poly}^{\downarrow[1]}(M^r)$ and $\Id_H = [\mu_A, \cdot]_G$ on $D_{\poly}^{\downarrow[1]}(M^r)$. Likewise, the Lie algebra structure$[\cdot,\cdot]$ is the Schouten bracket $\schouten{\cdot,\cdot}$ and Gerstenhaber bracket $[\cdot,\cdot]_G$, respectively.)

Now tensor the degree\/-\/one parts of both 
dgLa structures with $\hbar \cdot \Bbbk[[\hbar]]$, i.e.\ with formal power series starting at~$\hbar^1$, and, preserving the notation (that is, extending the brackets and the differentials by $\hbar$-\/linearity), consider the same Maurer\/--\/Cartan equation~\eqref{EqMC}.
Let us study its formal power series solutions $\alpha = \hbar^1 \alpha_1 + \cdots$. 

So far, in the Poisson world we have that the Maurer\/--\/Cartan bi\/-\/vectors are formal Poisson structures $0+\hbar\cP_1 + \bar{o}(\hbar)$ satisfying~\eqref{EqMC}, which is $\schouten{\hbar\cP_1 + \bar{o}(\hbar), \hbar\cP_1 + \bar{o}(\hbar)} = 0$ with zero differential.
In the world of associative structures, the Maurer\/--\/Cartan elements are the tails~$B$ in expansions $\star = \mu + B$, so that the associativity equation $[\star, \star]_G=0$ reads (for $[\mu,\mu]_G = 0$) 
\[
[\mu,B]_G + \tfrac{1}{2}[B,B]_G = 0, \] which is again~\eqref{EqMC}.

\section{The $L_\infty$-\/morphisms} 
\noindent%
Our 
goal is to have (and use) a morphism $T_{\poly}^{\downarrow[1]}(M^r) \to D_{\poly}^{\downarrow[1]}(M^r)$ which would induce a map that takes Maurer\/--\/Cartan elements in the Poisson world to Maurer\/--\/Cartan elements in the associative world.

The leading term~$\mathcal{F}_1$, i.e. the first approximation to the morphism which we consider, is the Hochschild\/--\/Kostant\/--\/Rosenberg (HKR) map (obviously, extended by linearity), 
\[ \mathcal{F}\colon \xi_1 \wedge \ldots \wedge \xi_m \mapsto \frac{1}{m!} \sum\nolimits_{\sigma \in S_m} (-)^\sigma \xi_{\sigma(1)} \otimes \ldots \otimes \xi_{\sigma(m)}, \] which takes a split multi\/-\/vector to a polydifferential operator (in fact, an $m$-\/vector).
More explicitly, we have that
\begin{equation}\label{EqHKR}
\mathcal{F}_1\colon (\xi_1 \wedge \ldots \wedge \xi_m) \mapsto \bigg( a_1 \otimes \ldots \otimes a_m \mapsto \frac{1}{m!} \sum\nolimits_{\sigma \in S_m} (-)^\sigma \prod\nolimits_{i=1}^m \xi_{\sigma(i)}(a_i) \bigg),
\end{equation}
here $a_j \in A \mathrel{{:}{=}} C^\infty(M^r)$.
For zero\/-\/vectors $h\in A$, one has $\cF_1\colon h\mapsto(1\mapsto h)$.

\begin{claim}[{\cite[\S4.6.2]{KontsevichFormality}}]
The leading term, map~$\mathcal{F}_1$, is \emph{not} a Lie algebra morphism (which, if it were, would take the Schouten bracket of multivectors to the Gerstenhaber bracket of polydifferential operators).
\end{claim}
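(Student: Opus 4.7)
The plan is to furnish an explicit counterexample. If $\mathcal{F}_1$ were a Lie algebra morphism from $(T_\poly^{\downarrow[1]}, \lshad\,,\,\rshad)$ into $(D_\poly^{\downarrow[1]}, [\,,\,]_G)$, then the identity $[\mathcal{F}_1(\pi), \mathcal{F}_1(\pi)]_G = \mathcal{F}_1(\lshad \pi, \pi \rshad)$ would hold for every bivector~$\pi$; in particular, for any Poisson~$\pi$ the left-hand side would be forced to vanish. Equivalently, the ``naive'' deformation $\mu + \hbar\,\mathcal{F}_1(\pi)$ of~$\mu$ would have to be associative modulo~$\bar{o}(\hbar^2)$, not merely modulo~$\bar{o}(\hbar)$. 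Producing a single Poisson~$\pi$ for which this second-order associativity fails closes the argument.

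The simplest choice works: take $M^r = \mathbb{R}^2$ with coordinates~$x_1, x_2$ and the constant bivector $\pi = \partial_1 \wedge \partial_2$. Since $[\partial_1, \partial_2] = 0$, the Schouten bracket $\lshad \pi, \pi \rshad$ vanishes identically and so does $\mathcal{F}_1(\lshad \pi, \pi \rshad)$. The HKR formula~\eqref{EqHKR} gives $P := \mathcal{F}_1(\pi) = \tfrac{1}{2}(\partial_1 \otimes \partial_2 - \partial_2 \otimes \partial_1)$, i.e.\ $P(a,b) = \tfrac{1}{2}(\partial_1 a \cdot \partial_2 b - \partial_2 a \cdot \partial_1 b)$.

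I then compute $[P,P]_G$. Because $\deg P = 1$, the definition recalled in the excerpt collapses to $[P,P]_G = 2\, P \circL P$ with $(P \circL P)(a_0,a_1,a_2) = P(P(a_0,a_1), a_2) - P(a_0, P(a_1, a_2))$. Expanding the inner derivatives $\partial_j P(\cdot,\cdot)$ via Leibniz produces genuinely second-order contributions in the arguments. The key observation is that the monomial $\tfrac{1}{4}\, \partial_1^2 a_0 \cdot \partial_2 a_1 \cdot \partial_2 a_2$ arises only from the first summand, via $\tfrac{1}{2}\,\partial_1 P(a_0,a_1)\cdot \partial_2 a_2$, and has no partner in $P(a_0, P(a_1, a_2))$, which never differentiates~$a_0$ twice. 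Hence this monomial survives; so $[P,P]_G \neq 0$, whereas $\mathcal{F}_1(\lshad \pi, \pi \rshad) = 0$, contradicting the hypothetical morphism identity.

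The conceptual point behind the counterexample --- which also explains why no detailed coefficient tracking is really needed --- is that by~\eqref{EqHKR} the image of~$\mathcal{F}_1$ consists only of polydifferential operators which are first-order derivations in each argument; thus no trivector field on~$M^r$ can produce a second-order derivative like~$\partial_1^2 a_0$ under~$\mathcal{F}_1$. The only mildly delicate step is the Leibniz-and-sign bookkeeping that isolates a non-cancelling monomial, but this reduces to inspecting a single coefficient rather than expanding the full three-argument formula.
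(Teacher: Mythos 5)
Your proof is correct and follows essentially the same route as the paper's: both argue by counterexample from the differential-order mismatch, namely that the Gerstenhaber bracket of two HKR images of bivectors has second-order components in an argument (your surviving monomial $\tfrac{1}{4}\,\partial_1^2 a_0\cdot\partial_2 a_1\cdot\partial_2 a_2$) which the HKR image of the trivector $\lshad\pi,\pi\rshad$ can never produce. Your version merely makes the paper's generic remark ``in general those components do not vanish'' fully explicit by choosing the constant symplectic bivector on $\mathbb{R}^2$, for which the Schouten side vanishes identically.
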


\begin{proof}[Proof (by counterexample)]
Take two bi\/-\/vectors;
their Schouten bracket is a tri\/-\/vector, but the Gerstenhaber bracket of two bi\/-\/vectors is a differential operator which has homogeneous components of differential orders $(2$,$1$,$1)$ and $(1$,$1$,$2)$.
And in general, those components do not vanish.
\end{proof}

The construction of not a single map~$\mathcal{F}_1$ but of an entire collection $\bcF = \{\mathcal{F}_n$, $n\geqslant 1\}$ of maps 
does nevertheless yield a well\/-\/defined mapping of the Maurer\/--\/Cartan elements from the two differential graded Lie algebras.\footnote{%
The name `Formality' for the collection~$\bcF$ of maps
is motivated by Theorem~4.10 in~\cite{KontsevichFormality} and by the main theorem in \textit{loc.~cit.}%
}

\begin{theor}[{\cite[Main Theorem]{KontsevichFormality}}]
\label{ThMainMK97}
There exists a collection of linear maps $\bcF = \{\cF_n \colon T_{\poly}^{\downarrow[1]}(M^r)^{\otimes n} \to D_{\poly}^{\downarrow[1]}(M^r)$, $n \geqslant 1\}$ such that $\mathcal{F}_1$~is the HKR map~\eqref{EqHKR} and $\bcF$~is an $L_\infty$-\/morphism of the two differential graded Lie algebras\textup{:} $\bigl(T_{\poly}^{\downarrow[1]}(M^r)$, $\schouten{\cdot,\cdot}$, $d=0\bigr) \to \bigl(D_{\poly}^{\downarrow[1]}(M^r)$, $[\cdot,\cdot]_G$, $\Id_H=[\mu_A,\cdot]_G\bigr)$.
Namely,
\begin{enumerate}
\item each component $\mathcal{F}_n$ is homogeneous of own grading~$1-n$,
\item\label{LinftySkew} 
each morphism $\mathcal{F}_n$ is graded skew\/-\/symmetric, i.e. \[ \mathcal{F}_n(\ldots,\xi,\eta,\ldots) = -(-)^{\deg(\xi)\cdot\deg(\eta)} \mathcal{F}_n(\ldots,\eta,\xi,\ldots) \] for $\xi,\eta$ homogeneous, 
\item for each $n \geqslant 1$ and (homogeneous) multivectors $\xi_1$, $\ldots$, $\xi_n \in T_{\poly}^{\downarrow[1]}(M^r)$, we have that (cf.~\cite[\S3.6]{CattaneoKellerTorossian})
\begin{multline}
\label{EqLinfty}
\Id_H(\mathcal{F}_n(\xi_1,\ldots,\xi_n)) - (-)^{n-1} \sum_{i=1}^n (-)^u \mathcal{F}_n(\xi_1,\ldots,d\xi_i,\ldots,\xi_n) 
\\
+\tfrac{1}{2} \sum\nolimits_{\substack{p+q=n\\p,q>0}} \sum\nolimits_{\sigma \in S_{p,q}} (-)^{pn + t} \bigl[\mathcal{F}_p(\xi_{\sigma(1)},\ldots,\xi_{\sigma(p)}),\mathcal{F}_q(\xi_{\sigma(p+1)},\ldots,\xi_{\sigma(n)})\bigr]_G
\\
= (-)^n \sum\nolimits_{i<j} (-)^s \mathcal{F}_{n-1}\bigl([\xi_i,\xi_j],\xi_1,\ldots,\widehat{\xi_i},\ldots,\widehat{\xi_j},\ldots,\xi_n\bigr).
\end{multline}
In the above formula, $\sigma$~runs through the set of $(p,q)$-shuffles, i.e.\ all permutations $\sigma \in S_n$ such that $\sigma(1) <\ldots<\sigma(p)$ and independently $\sigma(p+1) < \ldots < \sigma(n)$\textup{;} the exponents $t$ and $s$ are the numbers of transpositions of odd elements which we count when passing $(t)$ from $(\mathcal{F}_p$, $\mathcal{F}_q$, $\xi_1$, $\ldots$, $\xi_n)$ to $(\mathcal{F}_p$, $\xi_{\sigma(1)}$, $\ldots$, $\xi_{\sigma(p)}$, $\mathcal{F}_q$, $\xi_{\sigma(p+1)}$, $\ldots$, $\xi_{\sigma(n)})$, and $(s)$ from $(\xi_1$, $\ldots$, $\xi_n)$ to $(\xi_i$, $\xi_j$, $\xi_1$, $\ldots$, $\widehat{\xi_1}$, $\ldots$, $\widehat{\xi_j}$, $\ldots$, $\xi_n)$.\footnote{%
The exponent~$u$ is not essential for us now because the differential~$d$ on $T_{\poly}^{\downarrow[1]}(M^r)$ is set equal to zero identically, so that the entire term with~$u$ does not contribute (recall $\mathcal{F}_n$ is linear).}
\end{enumerate}
\end{theor}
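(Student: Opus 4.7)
The plan is to follow Kontsevich's explicit construction: build each $\cF_n$ as a weighted sum over oriented graphs with $n$ aerial and $m$ ground vertices, and then derive the $L_\infty$-\/identity~\eqref{EqLinfty} as a consequence of Stokes' theorem applied to the compactified configuration space of points in the upper half\/-\/plane.

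Concretely, for each $n\geq 1$ one sets
\[
\cF_n(\xi_1,\ldots,\xi_n) = \sum\nolimits_{\Gamma} w_\Gamma\cdot B_\Gamma(\xi_1,\ldots,\xi_n),
\]
where $\Gamma$ ranges over admissible Kontsevich graphs with $n$ aerial vertices whose out\/-\/degree at vertex~$i$ equals the multi\/-\/vector order of~$\xi_i$, together with~$m$ ground vertices holding function arguments; $B_\Gamma$ is the polydifferential operator built by contracting the coefficients of the~$\xi_i$ along directed edges; and $w_\Gamma=\int_{\overline{C_{n,m}^+}}\bigwedge_{e\in\Edge(\Gamma)} d\phi_e$ is the Kontsevich integral weight with respect to the harmonic angle propagator~$\phi_e$. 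Items~(1) and~(2) are then straightforward: the grading~$1-n$ comes from matching the total form degree against $\dim C_{n,m}^+$, while graded skew\/-\/symmetry is visible from relabelling aerial vertices. For $n=1$ only graphs with no internal edges contribute to $\cF_1(\xi_1\wedge\ldots\wedge\xi_m)$, and summing over the labellings of ground vertices reproduces the HKR formula~\eqref{EqHKR}.

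For the $L_\infty$-\/relation in item~(3), I would fix $n$ and $m$ and integrate the closed top\/-\/form $\bigwedge_e d\phi_e$ over the boundary of $\overline{C_{n,m}^+}$; Stokes' theorem forces the total to vanish. The boundary is a union of codimension\/-\/one strata $\partial_S$ indexed by subsets~$S$ of vertices that collide. Only two types of strata produce non\/-\/vanishing contributions. Type~(a): $S$ consists of two aerial points meeting in the open upper half\/-\/plane; the local integral on the collapsing sphere computes the antisymmetric contraction of two multi\/-\/vectors, and the stratum thus contributes the term $\cF_{n-1}(\schouten{\xi_i,\xi_j},\xi_1,\ldots,\widehat{\xi_i},\ldots,\widehat{\xi_j},\ldots,\xi_n)$ — precisely the right\/-\/hand side of~\eqref{EqLinfty}. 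Type~(b): $S$ collapses to a point on the real axis and partitions the vertex set into an `inner' cluster and an `outer' configuration; the factorisation $\partial_S\cong\overline{C_{p,k}^+}\times\overline{C_{n-p+1,m-k+1}^+}$ of the stratum together with the corresponding factorisation of the integrand yields, after summing over $(p,q)$-\/shuffles, the Gerstenhaber\/-\/bracket terms $[\cF_p(\xi_{\sigma(1)},\ldots,\xi_{\sigma(p)}),\cF_q(\xi_{\sigma(p+1)},\ldots,\xi_{\sigma(n)})]_G$; the degenerate case in which one collapsing factor is an `empty' configuration interpreted as $\mu_A$ produces the Hochschild term $\Id_H\cF_n(\xi_1,\ldots,\xi_n)$. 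Collecting the type~(a) and type~(b) contributions reproduces~\eqref{EqLinfty} exactly.

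The hard part will be the vanishing of all remaining boundary strata. One must rule out: collisions of three or more aerial points by a dimension count on the local `lost directions'; configurations whose collapsing subgraph carries an imbalance of incoming/outgoing edges, where an involution of the local integration domain reverses the orientation of the form (Kontsevich's vanishing lemma); and degenerate faces on which a factor of the integrand becomes a total derivative along the lower\/-\/dimensional stratum. In parallel, one must reconcile Kontsevich's sign conventions with the shifted\/-\/graded Koszul signs carried by $\schouten{\,,\,}$ and $[\,,\,]_G$ in order to recover the exponents~$t$, $s$ and $(-)^{pn}$, $(-)^n$ in~\eqref{EqLinfty}; this combinatorial bookkeeping is the point at which~\cite{ArnalManchonMasmoudi,WillwacherCalaque} refine the original text.
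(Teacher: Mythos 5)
The paper does not actually prove this theorem---it is quoted verbatim from Kontsevich with a citation---but your sketch is precisely the standard proof from that source (graph expansion with harmonic-propagator weights, Stokes' theorem on the compactified configuration space, classification of the codimension-one boundary strata into ``two aerial points collide'' versus ``a cluster collapses onto the real line'', and the vanishing lemmas for the remaining faces), and it coincides with the explicit construction of $\bcF$ that the paper itself recalls in its section on the formality morphism. The one wording to tighten is that the Stokes argument is run for graphs with $2n+m-3$ edges, so that $\bigwedge_{e} d\phi_e$ is a closed form of degree $\dim \bar{C}_{n,m}^+ - 1$ (a top form on each boundary stratum, not on the whole space); otherwise the integral over the boundary would vanish trivially.
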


\begin{rem}
Let $n\mathrel{{:}{=}} 1$, then equality~\eqref{EqLinfty} in Theorem~\ref{ThMainMK97} is 
\[ \Id_H \circ \mathcal{F}_1 - (-)^{1-1} \cdot (-)^{u=0 \text{ from } (d,\xi_1) \mapsto (d,\xi_1)} F_1 \circ d = 0 \iff \Id_H \circ \mathcal{F}_1 = \mathcal{F}_1 \circ d, \] 
whence $\mathcal{F}_1$ is a morphism of complexes.

\noindent
$\bullet$\quad Let $n \mathrel{{:}{=}} 2$, then for any homogeneous multivectors~$\xi_1$ and~$\xi_2$, 
\[ \mathcal{F}_1\bigl(\schouten{\xi_1,\xi_2}\bigr) - \bigl[\mathcal{F}_1(\xi_1),\mathcal{F}_1(\xi_2)\bigr]_G = \Id_H\bigl(\mathcal{F}_2(\xi_1,\xi_2)\bigr) + \mathcal{F}_2\bigl((d=0)(\xi_1),\xi_2\bigr) + (-)^{\deg \xi_1} \mathcal{F}_2\bigl(\xi_1, (d=0)(\xi_2)\bigr), \]
so that in our case $\mathcal{F}_1$~is ``almost'' a Lie algebra morphism but for the discrepancy which is controlled by the differential of the (value of the) succeeding map~$\mathcal{F}_2$ in the sequence $\bcF = \{\mathcal{F}_n, n \geqslant 1\}$.
Big formula~\eqref{EqLinfty} shows in precisely which sense this is also the case for higher homotopies~$\mathcal{F}_n$, $n \geqslant 2$ in the $L_\infty$-\/morphism~$\bcF$.
Indeed, 
an $L_\infty$-\/morphism is a map between dgLas which, in every term, almost preserves the bracket up to a homotopy~$\Id_H \circ \{ \ldots \}$ provided 
by the next term.
\end{rem}


Even though neither~$\mathcal{F}_1$ nor the entire collection~$\bcF = \{\mathcal{F}_n, n \geqslant 1\}$ is a dgLa morphism, their defining property~\eqref{EqLinfty} guarantees that $\bcF$~gives us a well defined mapping of the Maurer\/--\/Cartan elements (which, we recall, are formal Poisson bi\/-\/vectors and tails~$B$ of associative (non)commutative multiplcations $\star = \mu + B$ on~$A[[\hbar]]$, respectively).

\begin{cor}
\label{CorMCtoMC}
The natural $\hbar$-\/linear extension of~$\bcF$, now acting on the space of formal power series in~$\hbar$ with coefficients in~$T_{\poly}^{\downarrow[1]}(M^r)$ and with zero free term 
by the rule 
\[ \xi \mapsto \sum\nolimits_{n\geqslant 1} \frac{1}{n!} \mathcal{F}_n(\xi,\ldots,\xi), \]
takes the Maurer\/--\/Cartan elements $\tilde{\cP} = \hbar \cP + \bar{o}(\hbar)$ to the Maurer\/--\/Cartan elements $B = \sum_{n\geqslant 1} \frac{1}{n!} \mathcal{F}_n(\tilde{\cP},\ldots,\tilde{\cP}) = \hbar \tilde{\cP} + \bar{o}(\hbar)$.
(Note that the HKR map~$\mathcal{F}_1$, extended by $\hbar$-\/linearity, still is an identity mapping on multivectors, now viewed as special polydifferential operators.)
\end{cor}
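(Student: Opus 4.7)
The plan is to derive the conclusion by substituting $\xi_1=\cdots=\xi_n=\tilde{\cP}$ into the defining $L_\infty$-relation \eqref{EqLinfty} of Theorem~\ref{ThMainMK97} for every $n\geqslant 1$, dividing by $n!$, and summing over~$n$. First I would verify that the series $B=\sum_{n\geqslant1}\tfrac{1}{n!}\cF_n(\tilde{\cP},\ldots,\tilde{\cP})$ is a well-defined element of $\hbar\cdot D_\poly^{\downarrow[1]}(M^r)[[\hbar]]$: since $\tilde{\cP}=\hbar\cP+\bar{o}(\hbar)$ starts at order~$\hbar^1$, each tensor power $\tilde{\cP}^{\otimes n}$ starts at order $\hbar^n$, so only finitely many $\cF_n$ contribute at any given order of~$\hbar$. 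Moreover, since $\deg\tilde{\cP}=1$ (odd) and $\deg\cF_n=1-n$, the element $\cF_n(\tilde{\cP},\ldots,\tilde{\cP})$ has degree $n+(1-n)=1$ in $D_\poly^{\downarrow[1]}(M^r)$, confirming that $B$ lives in the degree-one part where the Maurer--Cartan equation is posed.

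Next I would specialise \eqref{EqLinfty}. Since $d\equiv0$ on $T_\poly^{\downarrow[1]}(M^r)$, the first sum in the middle line of \eqref{EqLinfty} vanishes outright. Because all entries are equal, every $(p,q)$-shuffle produces the same summand, so the shuffle sum collapses to a multiplicity $\binom{n}{p}$ multiplied by a single Gerstenhaber bracket $[\cF_p(\tilde{\cP}^{\otimes p}),\cF_q(\tilde{\cP}^{\otimes q})]_G$. Similarly the right-hand side of \eqref{EqLinfty} collapses to $\binom{n}{2}$ copies of $\cF_{n-1}\bigl(\schouten{\tilde{\cP},\tilde{\cP}},\tilde{\cP},\ldots,\tilde{\cP}\bigr)$, which is zero by the hypothesis $\schouten{\tilde{\cP},\tilde{\cP}}=0$ that $\tilde{\cP}$ is Maurer--Cartan in the source dgLa. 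What remains, after dividing by $n!$ and using $\binom{n}{p}/n!=1/(p!\,q!)$, is the identity
\[
\tfrac{1}{n!}\,\Id_H\bigl(\cF_n(\tilde{\cP}^{\otimes n})\bigr)
+\tfrac{1}{2}\sum_{\substack{p+q=n\\p,q>0}}\tfrac{1}{p!\,q!}\,
\bigl[\cF_p(\tilde{\cP}^{\otimes p}),\cF_q(\tilde{\cP}^{\otimes q})\bigr]_G = 0,
\]
once one confirms that the sign $(-)^{pn+t}$ reduces to~$+1$ on the all-equal diagonal (all transpositions being between odd elements, $t\equiv \binom{p}{1}\cdot q\pmod 2$, matching the parity of $pn$ modulo~$2$ so that the two exponents cancel).

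Summing over $n\geqslant 1$ then yields $\Id_H(B)+\tfrac12[B,B]_G=0$, which is precisely the Maurer--Cartan equation in $D_\poly^{\downarrow[1]}(M^r)[[\hbar]]$; equivalently, $\star:=\mu_A+B$ is associative. The leading-order claim $B=\hbar\tilde{\cP}+\bar{o}(\hbar)$ follows from $\cF_1$ being the HKR map \eqref{EqHKR} — which, when $\hbar$-linearly extended, sends a bi-vector to the corresponding bidifferential operator identified with it — together with the observation that $\tfrac{1}{n!}\cF_n(\tilde{\cP}^{\otimes n})$ is $O(\hbar^n)$ for $n\geqslant 2$.

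The main obstacle I anticipate is the sign bookkeeping in the shifted-graded setting: verifying that the shuffle-induced sign $(-)^{pn+t}$ together with the skew-symmetry of each $\cF_n$ in its odd arguments indeed conspires to $+1$, and that the factor $\tfrac12$ in the Maurer--Cartan equation is recovered rather than doubled. A secondary, but essentially notational, point is to confirm that the $\hbar$-adic extension commutes with $\Id_H$ and with the Gerstenhaber bracket, which is automatic from their $\Bbbk[[\hbar]]$-linearity, so that termwise application of \eqref{EqLinfty} produces the desired equality as formal power series.
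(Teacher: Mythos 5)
Your proposal follows essentially the same route as the paper's proof: substitute the diagonal $\xi_1=\cdots=\xi_n=\tilde{\cP}$ into the $L_\infty$ condition~\eqref{EqLinfty}, note that the term with $d\equiv0$ drops out, collapse the shuffle sum to the multiplicity $\binom{n}{p}$, observe that the right\/-\/hand side is a multiple of $\cF_{n-1}\bigl(\schouten{\tilde{\cP},\tilde{\cP}},\tilde{\cP},\ldots,\tilde{\cP}\bigr)=0$, then divide by $n!$ and resum to get the Maurer--Cartan equation for~$B$. Two remarks. First, the paper actually carries this out only in the restricted case $\tilde{\cP}=\hbar\cP$ and explicitly defers the general tail $\bar{o}(\hbar)$ as ``an extensive exercise in combinatorics''; your direct substitution of the full series $\tilde{\cP}$, justified by the $\hbar$-\/multilinearity of the maps $\cF_n$, of $[\cdot,\cdot]_G$ and of $\Id_H$, covers the general case more cleanly and is a genuine (if small) improvement. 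Second, your parenthetical sign verification does not work as written: taking $t\equiv pq$ gives $(-)^{pn+t}=(-)^{p(p+q)+pq}=(-)^{p^2}=(-)^{p}$, which is $-1$ for odd~$p$. The correct count is that $t$ arises from transporting the symbol $\cF_q$, of grading $1-q$, past $p$ copies of the bivector of shifted degree $+1$, so $t=(1-q)\cdot p$ and $(-)^{pn+t}=(-)^{p(p+q)+(1-q)p}=(-)^{p(p+1)}=+1$; the shuffled arguments themselves contribute no sign because swapping two odd entries inside $\cF_n$ costs $-(-)^{1\cdot1}=+1$ by item~\ref{LinftySkew} of Theorem~\ref{ThMainMK97}. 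With that one repair your argument coincides with the paper's.
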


In plain terms, for a bivector~$\cP$ itself Poisson, formal Poisson structures $\tilde{\cP} = \hbar \cP + \bar{o}(\hbar)$ satisfying $\schouten{\tilde{\cP}, \tilde{\cP}} = 0$ are mapped by~$\bcF$ 
to the tails $B = \hbar{\cP} + \bar{o}(\hbar)$ such that $\star = \mu + B$ is associative and its leading order deformation term is a given Poisson structure~$\cP$.

\begin{proof}[Proof (of Corollary~\ref{CorMCtoMC})]
Let us presently consider the restricted case when $\tilde{\cP} = \hbar \cP$, without any higher order tail $\bar{o}(\hbar)$.
The Maurer\/--\/Cartan equation in $D_{\poly}^{\downarrow[1]}(M^r) \otimes \hbar \Bbbk[[\hbar]]$ is $[\mu, B]_G + \frac{1}{2}[B,B]_G = 0$, where $B = \sum_{n \geqslant 1} \frac{1}{n!} \mathcal{F}_n(\tilde{\cP},\ldots,\tilde{\cP})$ and we let $\tilde{\cP} = \hbar \cP$, so that $B = \sum_{n \geqslant 1} \frac{\hbar^n}{n!} \mathcal{F}_n(\cP$, $\ldots$, $\cP)$.
Let us plug this formal power series in the l.\/-\/h.s.\ of the above equation.
Equating the coefficients at powers~$\hbar^n$ and multiplying by~$n!$, we obtain the expression
\[ [\mu, \mathcal{F}_n(\cP,\ldots,\cP)]_G + \tfrac{1}{2} \sum\nolimits_{\substack{p+q=n\\p,q>0}} \frac{n!}{p!q!} \bigl[\mathcal{F}_p(\cP,\ldots,\cP),\mathcal{F}_q(\cP,\ldots,\cP)\bigr]_G. 
\]
It is readily seen that now the sum $\sum_{\sigma \in S_{p,q}}$ in~\eqref{EqLinfty} over the set of $(p,q)$-\/shuffles of $n=p+q$ identical copies of an object~$\cP$ just counts the number of ways to pick $p$~copies going first in an ordered string of length~$n$.
To balance the signs, we note at once 
that by item~\ref{LinftySkew} in Theorem~\ref{ThMainMK97}, see above, $\mathcal{F}_p(\ldots,\cP^{(\alpha)},\cP^{(\alpha+1)},\ldots) = +\mathcal{F}_p(\ldots,\cP^{(\alpha+1)},\cP^{(\alpha)},\ldots)$ because bi\/-\/vector's shifted degree is~$+1$, so that no $(p,q)$-\/shuffles of~$(\cP,\ldots,\cP)$ contribute with any sign factor.
The only sign contribution that remains stems from the symbol~$\mathcal{F}_q$ of grading~$1-q$ transported along $p$~copies of odd\/-\/degree bi\/-\/vector~$\cP$; this yields $t = (1-p)\cdot q$ and $(-)^{pn+t} = (-)^{p\cdot(p+q)} \cdot (-)^{(1-q)\cdot p} = (-)^{p\cdot (p+1)} = +$.

The left\/-\/hand side of the Maurer\/--\/Cartan equation~\eqref{EqMC} is, by the above, expressed by the left\/-\/hand side of~\eqref{EqLinfty} which the $L_\infty$-\/morphism $\bcF$ satisfies.
In the right\/-\/hand side of~\eqref{EqLinfty}, we now obtain (with, actually, whatever sign factors) the values of linear mappings~$\mathcal{F}_{n-1}$ at twice the Jacobiator $\schouten{\tilde{\cP},\tilde{\cP}}$ as one of the arguments.
All these values are therefore zero, which implies that the right\/-\/hand side of the Maurer\/--\/Cartan equation~\eqref{EqMC} vanishes, so that the tail~$B$ indeed is a Maurer\/--\/Cartan element in the Hochschild cochain complex (in other words, the star\/-\/product $\star = \mu + B$ is associative).

This completes the proof in the restricted case when $\tilde{\cP} = \hbar \cP$.
Formal power series bi\/-\/vectors $\tilde{\cP} = \hbar \cP + \bar{o}(\hbar)$ refer to the same count of signs as above, yet the calculation of multiplicities at~$\hbar^n$ (for all possible lexicographically ordered $p$- and $q$-\/tuples of $n$~arguments) is an extensive exercise in combinatorics.
\end{proof}

\begin{cor}
\label{CorDiamondF}
Because the right\/-\/hand side of~\eqref{EqMC} in the above reasoning is determined by the right\/-\/hand side of~\eqref{EqLinfty}, we read off an explicit formula of the operator~ $\Diamond$ that solves the factorization problem 
\begin{equation}\tag{\ref{EqDiamondAssoc}}
\Assoc(\star)(\cP)(f,g,h) = \Diamond\bigl(\cP,\schouten{\cP,\cP}\bigr)(f,g,h),
   \qquad f,g,h\in A[[\hbar]]. 
\end{equation}
Indeed, the operator is
\begin{equation}
\label{EqDiamondF}
\Diamond = 2 \cdot \sum\nolimits_{n \geqslant 1} \frac{\hbar^n}{n!} \cdot c_n \cdot \mathcal{F}_{n-1}\bigl(\schouten{\cP,\cP},\cP,\ldots,\cP\bigr).
\end{equation}
\end{cor}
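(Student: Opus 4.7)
The strategy is to re-run the proof of Corollary~\ref{CorMCtoMC}, but to retain --- rather than discard --- the right\/-\/hand side of~\eqref{EqLinfty}. Concretely, I would begin from
\[
\Assoc(\star) = \tfrac{1}{2}[\star,\star]_G = [\mu,B]_G + \tfrac{1}{2}[B,B]_G, \qquad B = \sum\nolimits_{n\geqslant 1}\frac{\hbar^n}{n!}\,\mathcal{F}_n(\cP,\ldots,\cP),
\]
and extract the coefficient of $\hbar^n$ on both sides. By the count of multiplicities and signs already carried out in the proof of Corollary~\ref{CorMCtoMC}, this coefficient matches, up to the normalization $1/n!$, the left\/-\/hand side of~\eqref{EqLinfty} evaluated at $\xi_1=\cdots=\xi_n=\cP$.

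Next I would invoke the $L_\infty$-\/relation~\eqref{EqLinfty} to substitute in the right\/-\/hand side, namely
\[
(-)^n\sum\nolimits_{i<j}(-)^s\,\mathcal{F}_{n-1}\bigl(\schouten{\xi_i,\xi_j},\xi_1,\ldots,\widehat{\xi_i},\ldots,\widehat{\xi_j},\ldots,\xi_n\bigr),
\]
specialized to $\xi_1=\cdots=\xi_n=\cP$. Every summand then contains $\schouten{\cP,\cP}$ as one argument; using the graded skew\/-\/symmetry of~$\mathcal{F}_{n-1}$, together with the oddness of the shifted degree of a bi\/-\/vector (so that every transposition of two identical copies of~$\cP$ contributes a $+$~sign), I would bring each of the $\binom{n}{2}$ pairs into the common normal form $\mathcal{F}_{n-1}(\schouten{\cP,\cP},\cP,\ldots,\cP)$ and collect $(-)^s$ into a uniform factor depending only on~$n$. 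Reassembling the orders $\hbar^n$ into a formal power series then yields~\eqref{EqDiamondF}, with $c_n$ expressed by the product of $\binom{n}{2}$, $(-)^n$, the residual sign $(-)^s$, and $1/n!$; the overall prefactor~$2$ reconciles the normalization $\Assoc(\star)=\tfrac{1}{2}[\star,\star]_G$ with the appearance of $\schouten{\cP,\cP}$ (rather than the Jacobiator $\Jac(\cP)=\tfrac{1}{2}\schouten{\cP,\cP}$) as the second slot of~$\Diamond$ on the right\/-\/hand side of~\eqref{EqDiamondAssoc}.

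The most delicate step is the sign and multiplicity bookkeeping: verifying that the combination of $(-)^s$, $(-)^n$, the binomial $\binom{n}{2}$ and the normalization $1/n!$ collapses cleanly into the single scalar~$c_n$ claimed in~\eqref{EqDiamondF}, without leaving residual $n$-\/dependent factors absorbed elsewhere. The generalization from the restricted case $\tilde{\cP}=\hbar\cP$ to a genuine formal series $\tilde{\cP}=\hbar\cP+\bar{o}(\hbar)$ is then handled by the same multinomial count of lexicographically ordered $p$- and $q$-\/tuples flagged at the end of the proof of Corollary~\ref{CorMCtoMC}; this expands the combinatorial bookkeeping but introduces no new conceptual difficulty.
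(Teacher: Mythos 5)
Your plan follows the paper's own (implicit) argument: the corollary is read off from the proof of Corollary~\ref{CorMCtoMC} by keeping, rather than discarding, the right-hand side of~\eqref{EqLinfty} evaluated on $n$ copies of~$\cP$, which by the graded symmetry of $\mathcal{F}_{n-1}$ collapses to a scalar multiple of $\mathcal{F}_{n-1}\bigl(\schouten{\cP,\cP},\cP,\ldots,\cP\bigr)$. Note that the paper itself also stops short of completing the sign-and-multiplicity bookkeeping you flag as the delicate step --- the constants $c_n$ are deliberately left undetermined at this point (``But what are the coefficients $c_n$ equal to?'') and are only fixed experimentally and conjecturally ($c_n=n/6$) later on, so your assessment of where the real difficulty lies matches the authors' own.
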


\noindent
But what are the coefficients $c_n\in\BBR$ equal to? Let us find it out. 


\section{Explicit construction of the formality morphism~$\bcF$}
\noindent%
The first explicit formula for the formality morphism~$\bcF$ which we study 
in this paper 
was discovered by Kontsevich in~\cite[\S6.4]{KontsevichFormality}, providing an expansion of every term~$\mathcal{F}_n$ using weighted decorated graphs: 
\[ \bcF = \Bigl\{ \mathcal{F}_n = \sum\nolimits_{m\geqslant 0} \sum\nolimits_{\Gamma \in G_{n,m}} W_\Gamma \cdot \mathcal{U}_\Gamma \Bigr\}. 
\]
Here $\Gamma$~belongs to the set~$G_{n,m}$ of oriented graphs on $n$~internal vertices (i.e.\ arrowtails), $m$~sinks (from which no arrows start), and $2n+m-2 \geqslant 0$ edges, such that at every internal vertex there is an ordering of outgoing edges.
By decorating each edge with a summation index that runs from~$1$ to~$r$, by viewing each edge as a derivation $\partial/\partial x^\alpha$ of the arrowhead vertex content, by placing $n$~multivectors from an ordered tuple of arguments of~$\mathcal{F}_n$ into the respective vertices, now taking the sum over all indices of the resulting products of the content of vertices, and skew\/-\/symmetrizing over the $n$-\/tuple of (shifted-)\/graded multivectors, we realize each graph at hand as a polydifferential operator $T_{\poly}^{\downarrow[1]}(M^r)^{\otimes n} \to D_{\poly}^{\downarrow[1]}(M^r)$ whose arguments are multivectors.
Note that the \emph{value} $\mathcal{F}_n(\xi_1,\ldots,\xi_n)$ itself is, by construction, a differential operator w.r.t.\ the contents of sinks of the graph~$\Gamma$.
All of this is discussed in detail in~\cite{MKParisECM,
Ascona96,KontsevichFormality} or~\cite{f16,sqs15,cpp}.

The formula for the harmonic weights~$W_\Gamma \in \mathbb{R}$ is given in~\cite[\S6.2]{KontsevichFormality}; it is \[ W_\Gamma = \Bigg(\prod_{k=1}^n \frac{1}{\#\!\Star(k)!}\Bigg) \cdot \frac{1}{(2\pi)^{2n+m-2}} \int_{\bar{C}^+_{n,m}} \bigwedge \limits_{e \in E_\Gamma} d\phi_e, \]
where $\#\Star(k)$ is the number of edges \emph{star}ting from vertex $k$, $d\varphi_e$ is the ``harmonic angle'' differential $1$-form associated to the edge $e$, and the integration domain $\bar{C}^+_{n,m}$ is the connected component of $\bar{C}_{n,m}$ which is the closure of configurations where points $q_j$, $1 \leqslant j \leqslant m$ on $\mathbb{R}$ are placed in increasing order: $q_1 < \cdots < q_m$.
For convenience, let us also define \[ w_\Gamma = \bigg(\prod_{k=1}^n \#\!\Star(k)! \bigg) \cdot  W_\Gamma. \]
The convenience is that by summing over labelled graphs $\Gamma$, we actually sum over the equivalence classes $[\Gamma]$ (i.e. over unlabeled graphs) with multiplicities $(w_\Gamma/W_\Gamma)\cdot n! / \#\!\Aut(\Gamma)$.
The division by the volume $\#\!\Aut(\Gamma)$ of the symmetry group eliminates the repetitions of graphs which differ only by a labeling of vertices but, modulo such, do not differ by the labeling of ordered edge tuples (issued from the vertices which are matched by a symmetry).

Let us remember that the integrand in the formula of~$W_\Gamma$ 
is defined in terms of the harmonic propagator; other propagators (e.g.\ logarithmic, or other members of the family interpolating between harmonic and logarithmic~\cite{RossiWillwacher2014}) would give other formality morphisms.
A path integral realization of the $\star$-\/product itself and of the components~$\mathcal{F}_n$ in the formality morphism is proposed in~\cite{CattaneoFelder2000}.

To calculate the graph weights~$W_\Gamma$ in practice, we employ methods which were outlined in~\cite{cpp}, as well as~\cite[App.~E]{FelderWillwacher2008} (about the cyclic weight relations), and~\cite{BanksPanzerPym1812} that puts those real values in the context of Riemann multiple zeta functions and polylogarithms.\footnote{It is the values $w_\Gamma$ instead of $W_\Gamma$ which are calculated by software~\cite{BanksPanzerPym1812}.}
Examples of such decorated oriented graphs~$\Gamma$ and their weights~$W_\Gamma$ will be given in the next section.

\subsection{Sum over equivalence classes}

The sum in Kontsevich's formula is over \emph{labeled} graphs: internal vertices are numbered from $1$ to $n$, and the edges starting from each internal vertex $k$ are numbered from $1$ to $\#\!\Star(k)$.
Under a re-labeling $\sigma: \Gamma \mapsto \Gamma^\sigma$ of internal vertices and edges it is seen from the definitions that the operator $\mathcal{U}_\Gamma$ and the weight $W_\Gamma$ enjoy the same skew\/-\/symmetry property (as remarked in \cite[\S6.5]{KontsevichFormality}), whence $W_\Gamma \cdot \mathcal{U}_\Gamma = W_{\Gamma^\sigma} \cdot \mathcal{U}_{\Gamma^\sigma}$.
It follows that the sum over labeled graphs can be replaced by a sum over equivalence classes $[\Gamma]$ of graphs, modulo labeling of internal vertices and edges.
For this it remains to count the size of an equivalence class: the edges can be labeled in $\prod_{k=1}^n \#\!\Star(k)!$ ways, while the $n$ internal vertices can be labeled in $n!/\#\!\Aut(\Gamma)$ ways.
\begin{example}
The double wedge on two ground vertices has only \emph{one} possible labeling of vertices, due to the automorphism that interchanges the wedges.
\end{example}
We denote by $M_\Gamma = \big(\prod_{k=1}^n \#\!\Star(k)!\big) \cdot n!/\#\!\Aut(\Gamma)$ the \emph{multiplicity} of the graph $\Gamma$, and let $\bar{G}_{n,m}$ be the set of equivalence classes $[\Gamma]$ modulo labeling of $\Gamma \in G_{n,m}$.
The formula for the formality morphism can then be rewritten as
\[ \bcF = \Bigl\{ \mathcal{F}_n = \sum\nolimits_{m\geqslant 0} \sum\nolimits_{[\Gamma] \in \bar{G}_{n,m}} M_\Gamma \cdot W_\Gamma \cdot \mathcal{U}_\Gamma \Bigr\};
\]
here the $\Gamma$ in $M_\Gamma \cdot W_{\Gamma} \cdot \mathcal{U}_\Gamma$ is \emph{any} representative of $[\Gamma]$.
Any ambiguity in signs (due to the choice of representative) in the latter two factors is cancelled in their product.
Note that the factor $\big(\prod_{k=1}^n \#\!\Star(k)!\big)$ in $M_\Gamma$ kills the corresponding factor in $W_\Gamma$, as remarked in \cite[\S6.5]{KontsevichFormality}.

\subsection{The coefficient of a graph in the $\star$-product}
\label{SecCoeffStar}

The $\star$-product associated to a Poisson structure $\cP$ is given by Corollary~\ref{CorMCtoMC}: \[\star = \mu + \sum_{n \geqslant 1} \frac{\hbar^n}{n!} \mathcal{F}_n(\cP,\ldots,\cP) = \mu + \sum_{n \geqslant 1} \frac{\hbar^n}{n!} \sum_{[\Gamma] \in \bar{G}_{n,2}} M_\Gamma \cdot W_\Gamma \cdot \mathcal{U}_\Gamma(\cP,\ldots,\cP). \]
For a graph $\Gamma \in G_{n,2}$ such that each internal vertex has two outgoing edges (these are the only graphs that contribute, because we insert bi-vectors) we have $M_\Gamma = 2^n \cdot n!/\#\!\Aut(\Gamma)$.
In total, the coefficient of $\mathcal{U}_\Gamma(\cP,\ldots,\cP)$ at $\hbar^n$ is $2^n / \#\!\Aut(\Gamma) \cdot W_\Gamma = w_\Gamma/\#\!\Aut(\Gamma)$.
The skew-symmetrization \emph{without prefactor} of bi-vector coefficients in $\mathcal{U}_\Gamma(\cP,\ldots,\cP)$ provides an extra factor $2^n$.
\begin{example}[at $\hbar^1$]
The coefficient of the wedge graph is $1/2$ and the operator is $2\cP$, hence we recover $\cP$.
\end{example}

\subsection{The coefficient of a Leibniz graph in the associator}
\label{SecCoeffLeibniz}

The factorizing operator $\Diamond$ for $\Assoc(\star)$ is given by Corollary \ref{CorDiamondF}:
\begin{align*}
\Diamond &= 2 \cdot \sum_{n\geqslant 1} \frac{\hbar^n}{n!} \cdot c_n \cdot \mathcal{F}_{n-1}\bigl(\schouten{\cP,\cP},\cP,\ldots,\cP\bigr) \\
& = 2 \cdot \sum_{n\geqslant 1} \frac{\hbar^n}{n!} \cdot c_n \cdot \sum_{[\Gamma] \in \bar{G}_{n-1,3}} M_\Gamma \cdot W_\Gamma \cdot \mathcal{U}_\Gamma\bigl(\schouten{\cP,\cP},\cP,\ldots,\cP\bigr).
\end{align*}
For a graph $\Gamma \in G_{n-1,3}$ where one internal vertex has three outgoing edges and the rest have two, we have $M_\Gamma = 3! \cdot 2^{n-2} \cdot (n-1)! / \#\!\Aut(\Gamma)$.
In total, the coefficient of $\mathcal{U}_\Gamma(\schouten{\cP,\cP}, \cP, \ldots, \cP)$ at $\hbar^n$ is
\[ \biggl[2 \cdot \frac{1}{n!} \cdot c_n \cdot 3! \cdot 2^{n-2} \cdot (n-1)!\biggr] \cdot \frac{W_\Gamma}{\#\!\Aut(\Gamma)}  = \biggl[2 \cdot \frac{c_n}{n}\biggl] \cdot \frac{w_\Gamma}{\#\!\Aut(\Gamma)} \]
The skew-symmetrization \emph{without prefactor} of bi- and tri-vector coefficients in the operator $\mathcal{U}_\Gamma(\schouten{\cP,\cP},\cP,\ldots,\cP)$ provides an extra factor $3! \cdot 2^{n-2}$.
\begin{example}[at $\hbar^2$]
\label{ExTripodCoeff}
The coefficient of the tripod graph is $c_2 \cdot \frac{1}{3!}$ and the operator is $3! \cdot \schouten{\cP,\cP}$, hence we recover $c_2 \schouten{\cP,\cP} = \tfrac{2}{3} \Jac(\cP)$.
(The right-hand side is known from the associator, e.g. from~\cite{sqs15}.)
This yields $c_2=1/3$.
In addition, we see that the HKR map $\mathcal{F}_1$ acts here by the identity on $\schouten{\cP,\cP}$.
\end{example}
In the next section, we shall find that at $\hbar^n$, the coefficients of our Leibniz graphs (with $\Jac(\cP)$ inserted instead of $\schouten{\cP,\cP}$) are 
\[ \frac{\schouten{P,P}}{\Jac(\cP)} \cdot \biggl[ 3! \cdot 2^{n-2} \biggr] \cdot \biggl[ 2 \cdot \frac{c_n}{n}\biggr] \cdot \frac{w_\Gamma}{\#\!\Aut(\Gamma)} = 2^n \cdot \frac{w_\Gamma}{\#\!\Aut(\Gamma)}, \qquad \text{ so } \qquad 3! \cdot 2^{n} \cdot \frac{c_n}{n} = 2^n.\]
%
We deduce that $c_n = n/3! = n/6$ in all our experiments.

\begin{conjecture}
For all $n\geqslant 2$, the coefficients in~\eqref{EqDiamondF} are $c_n=n/3!=n/6$ (hence, the coefficients of markers $\Gamma$ for equivalence classes $[\Gamma]$ of the Leibniz graphs in \eqref{EqDiamondF} are $2^n \cdot w_\Gamma / \#\!\Aut(\Gamma)$), although it still remains to be explained how exactly this follows from the $L_\infty$ condition~\eqref{EqLinfty}.
\end{conjecture}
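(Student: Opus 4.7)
The plan is to specialize the defining $L_\infty$-relation \eqref{EqLinfty} to $\xi_1=\xi_2=\cdots=\xi_n=\cP$ and read off $c_n$ by matching the result against the ansatz \eqref{EqDiamondF}. Since the differential on $T_\poly^{\downarrow[1]}(M^r)$ vanishes, the middle term on the left-hand side of \eqref{EqLinfty} disappears, and (as already observed in the proof of Corollary~\ref{CorMCtoMC}) what remains equals $n!\cdot\Assoc(\star)_n$, the coefficient of~$\hbar^n$ in the associator. All information about $c_n$ therefore lives in the right-hand side of \eqref{EqLinfty}, which I would reorganize into a scalar multiple of the single polydifferential operator $\mathcal{F}_{n-1}(\schouten{\cP,\cP},\cP,\ldots,\cP)$.

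The key steps would be: (i) For each pair $1\le i<j\le n$, use the graded skew-symmetry of $\mathcal{F}_{n-1}$ (item~\ref{LinftySkew} of Theorem~\ref{ThMainMK97}) together with $\deg\cP=1$ (odd) to show that $\mathcal{F}_{n-1}(\schouten{\xi_i,\xi_j},\xi_1,\ldots,\widehat{\xi_i},\ldots,\widehat{\xi_j},\ldots,\xi_n)$ coincides, without further sign change, with the standard $\mathcal{F}_{n-1}(\schouten{\cP,\cP},\cP,\ldots,\cP)$. (ii) Compute the Koszul sign $(-)^{s_{i,j}}$ prescribed by \eqref{EqLinfty} as the parity of transpositions of odd entries needed to bring $\xi_i$ and then $\xi_j$ to the first two slots; a direct count gives $s_{i,j}=(i-1)+(j-2)=i+j-3$. (iii) Form the combinatorial sum $N_n:=(-)^n\sum_{1\le i<j\le n}(-)^{s_{i,j}}$ and equate it with the coefficient $(2c_n/n!)\cdot n!$ extracted from \eqref{EqDiamondF} by the bookkeeping of Section~\ref{SecCoeffLeibniz}. (iv) Solve $2c_n=N_n$ for $c_n$ and verify that it simplifies to $n/6$.

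The main obstacle is step~(iv): a naive tally of the signs in step~(ii) gives $N_n$ as an alternating integer, which does not manifestly equal $n/3$. The reconciliation must come from carefully tracking (a) the normalization of the HKR map $\mathcal{F}_1$ on multivectors of varying arity --- the factor $1/m!$ in \eqref{EqHKR} affects the tri-vector slot of $\mathcal{F}_{n-1}$ differently from the $n-2$ bi-vector slots --- and (b) the convention relating $\schouten{\cP,\cP}$ to $\Jac(\cP)=\tfrac12\schouten{\cP,\cP}$, as exploited in Example~\ref{ExTripodCoeff}. To pin down the correct accounting unambiguously, I would first verify the conjecture by hand at $n=3$ and $n=4$ using the associators of~\cite{sqs15,cpp} and the weight tables of~\cite{BanksPanzerPym1812}, thereby fixing every sign and normalization. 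The hardest part will be promoting this case-by-case verification to a uniform proof for all~$n$; a natural route is induction on~$n$, exploiting the recursive structure of \eqref{EqLinfty}: the $L_\infty$-identity at depth $n-1$ applied to the triple $(\schouten{\cP,\cP},\cP,\ldots,\cP)$ should produce a clean recursion $c_n\leftrightarrow c_{n-1}$ compatible with $c_n=n/6$, whose base case $c_2=1/3$ is already established in Example~\ref{ExTripodCoeff}.
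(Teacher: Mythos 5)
You should first note that the paper does not prove this statement either: it is stated as a \emph{conjecture}, and the authors' own support for it is purely experimental --- the value $c_2=1/3$ is read off in Example~\ref{ExTripodCoeff}, and $c_3,c_4$ are confirmed by matching the Leibniz-graph expansions against the known associators at $\hbar^3$ and $\hbar^4$ (Claim~\ref{ClaimWeightsInSQS15} and Examples~\ref{ExA331}--\ref{ExA132}), with the bookkeeping of \S\ref{SecCoeffLeibniz} showing that the empirically observed coefficient $2^n\cdot w_\Gamma/\#\!\Aut(\Gamma)$ forces $c_n=n/6$. The portion of your plan that proposes hand-verification at $n=3,4$ against~\cite{sqs15,cpp} is therefore exactly what the paper does; up to that point you are on the same track.

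The rest of your proposal, however, does not close the argument, and the gap is precisely the one you concede in step~(iv). Your Koszul-sign count $s_{i,j}=i+j-3$ yields $N_n=(-)^n\sum_{i<j}(-)^{s_{i,j}}$ equal to $n/2$ for $n$ even and $(n-1)/2$ for $n$ odd, whereas matching against~\eqref{EqDiamondF} requires $2c_n=n/3$; these are irreconcilable without a genuinely new ingredient, so steps (i)--(iii) as stated cannot be correct as they stand. You correctly identify the two likely culprits --- the $1/m!$ normalization of the graded symmetrization acting differently on the tri-vector slot than on the bi-vector slots (the factor $3!$ in $c_n=n/3!$ is strongly suggestive of this), and the $\schouten{\cP,\cP}$ versus $\Jac(\cP)$ convention --- but you do not carry out either accounting, and the proposed induction on $n$ via the recursive structure of~\eqref{EqLinfty} is asserted rather than executed (it is not clear that applying the $L_\infty$-identity at depth $n-1$ to the tuple $(\schouten{\cP,\cP},\cP,\ldots,\cP)$ produces a recursion in $c_n$ at all, since $\schouten{\cP,\cP}$ vanishes for Poisson $\cP$ and one must work with the identity as an identity in $\cP$, not merely on Maurer--Cartan elements). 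In short: your proposal is a plausible research program, not a proof, and its unresolved core is exactly the point the paper itself flags as ``still to be explained.''
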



\section{Examples}
\noindent%
Let $\cP$ be a Poisson bi\/-\/vector on an affine manifold~$M^r$.
We inspect the asssociativity of the star\/-\/product $\star = \mu + \sum_{n \geqslant 1} \frac{\hbar^n}{n!}\mathcal{F}_n(\cP$, $\ldots$, $\cP)$ given by Corollary~\ref{CorMCtoMC} by illustrating the work of the factorization mechanism from Corollary~\ref{CorDiamondF}.
The powers of deformation parameter~$\hbar$ provide a natural filtration $\hbar^2\cdot\mathsf{A}^{(2)}+
\hbar^3\cdot\mathsf{A}^{(3)}+\hbar^4\cdot\mathsf{A}^{(4)}+\bar{o}(\hbar^4)$
so that we verify the vanishing of $\Assoc(\star)(\cP)(\cdot,\cdot,\cdot) \mod \bar{o}(\hbar^4)$ for~$\star$ mod~$\bar{o}(\hbar^4)$ order by order.

At~$\hbar^0$ there is nothing to do (indeed, the usual multiplication is associative).
All contribution to the associator of~$\star$ at~$\hbar^1$ cancels out because the leading deformation term~$\hbar \cP$ in the star\/-\/product $\star = \mu + \hbar P + \bar{o}(\hbar)$ is a bi\/-\/derivation.
The order~$\hbar^2$ was discussed in Example~\ref{ExTripodCoeff} in \S\ref{SecCoeffLeibniz}.

\begin{rem}\label{RemSplitJac}
In all our reasoning at any order~$\hbar^{n\geqslant 2}$, the Jacobiator in Leibniz graphs is expanded (w.r.t.\ the three cyclic permutations of its arguments) into the Kontsevich graphs, built of wedges, in such a way that the internal edge, connecting two Poisson bi\/-\/vectors in~$\Jac(\cP)$, is proclaimed Left by construction. 
Specifically, 
the algorithm 
to expand each Leibniz graphs is as follows:
\begin{enumerate}
\item Split the trivalent vertex with ordered targets $(a,b,c)$ 
into two wedges: the first wedge stands on~$a$ and~$b$ (in that order), and the second wedge stands on the first wedge\/-\/top and~$c$ (in that order), so that the internal edge of the Jacobiator is marked Left, preceding the Right edge towards~$c$.
\item Re-direct the edges (if any) which had 
the tri\/-\/valent vertex as their target, to one of the wedge\/-\/tops; take the sum over all possible combinations (this is the iterated Leibniz rule).
\item Take the sum over cyclic permutations of the targets of the edges which (initially) have $(a,b,c)$ as their targets (this is the expansion of the Jacobiator).
\end{enumerate}
\end{rem}

\subsection{The order~$\hbar^3$}
\label{ExLeibniz3}
To factorize the next order expansion of the associator, $\Assoc(\star)(\cP)$ mod~$\bar{o}(\hbar^3) = \hbar^2\cdot\mathsf{A}^{(2)}+
\hbar^3\cdot\mathsf{A}^{(3)}+\bar{o}(\hbar^3)$, at~$\hbar^3$ in the operator~$\Diamond$ in the right\/-\/hand side of~\eqref{EqDiamondAssoc}, we use graphs on $n-1 = 2$ vertices, 
$m=3$ sinks, and $2(n-1)+m-2 = 5$ edges. 

At~$\hbar^3$, two internal vertices in the Leibniz graphs in the r.\/-\/h.s.\ of factorization~\eqref{EqDiamondAssoc} are manifestly different: one vertex, containg the bi\/-\/vector~$\cP$, is a source of two outgoing edges, and the other, with $\schouten{\cP,\cP}$, of three. Therefore, 
the automorphism groups of such Leibniz graphs (under relabellings of internal vertices of the same valency but with the sinks fixed) can only be trivial, i.e.\ one\/-\/element. (This will not necessarily be the case of Leibniz graphs on $(n-2)+1$ internal vertices at~$\hbar^{\geqslant4}$: compare Examples~\ref{ExA331} vs~\ref{ExA322} on p.~\pageref{ExA322} below, where the weight of a graph is divided further by the size of its automorphism group.)

The coefficient of~$\hbar^3$ in the factorizing operator~$\Diamond$, 
\[ \operatorname{coeff}(\Diamond,\hbar^3) = 2 \cdot \frac{1}{3!} \cdot c_3 \cdot \sum_{[\Gamma] \in \bar{G}_{2,3}} M_\Gamma \cdot W_\Gamma \cdot \mathcal{U}_\Gamma\bigl(\schouten{\cP,\cP},\cP,\ldots,\cP\bigr), \]
expands into a sum of~$\leqslant 24$ 
admissible oriented graphs.
Indeed, there are six 
essentially different oriented graph topologies, filtered by the number of sinks on which the tri\/-\/vector $\schouten{\cP,\cP}$ and bi\/-\/vector~$\cP$ stand; the ordering of sinks in the associator then yields $3+3+3\times 2+3\times 2+3 = 24$ oriented graphs. 
(None of them is a zero orgraph.)
As we recall from~\cite{sqs15}, only thirteen 
of them actually occur with nonzero coefficients in the term $\mathsf{A}^{(3)}\sim\hbar^3$ in $\Assoc(\star)(\cP))$, the remaining 
eleven have zero weights.\footnote{Yet, these seemingly `unnecessary' graphs can contribute to the cyclic weight relations (see~\cite[App.~E]{FelderWillwacher2008}): zero values of some of such graph weights can simplify the system of linear relations between nonzero weights.}
The 
weights of $15$~relevant 
oriented Leibniz graphs from~\cite{sqs15} are listed 
in Table
~\ref{FigLeibniz3}.%
\footnote{%
To get the values, one uses the software~\cite{BanksPanzerPym1812} by Banks\/--\/Panzer\/--\/Pym or, independently, 
exact symbolic or approximate numeric methods from~\cite{cpp}, also taking into account the cyclic weight relations from~\cite[App.~E]{FelderWillwacher2008}.}%
\begin{table}[htb]
\caption{Weights $w_\Gamma$ of oriented Leibniz graphs $\Gamma$
in~$\operatorname{coeff}(\Diamond,\hbar^3)$.}\label{FigLeibniz3}
\centerline{
\renewcommand{\arraystretch}{1.3}
\begin{tabular}{cccc|cccc|cccc}
\hline
$(S_{\!\!f})_{221}$&=&[$\mathsf{01};\mathsf{012}$]&$\frac{1}{12}$ &
$(S_{\!g})_{122}$&=&[$\mathsf{12};\mathsf{012}$]&$\frac{1}{12}$ &
$(S_{\!h})_{212}$&=&[$\mathsf{20};\mathsf{012}$]&$\frac{ - 1}{12}$ 
\\{}
$(I_f)_{112}$&=&[$\mathsf{02};\mathsf{312}$]&$\frac{1}{48}$ &
$(I_g)_{112}$&=&[$\mathsf{12};\mathsf{032}$]&$\frac{1}{48}$ &
$(S_{\!h})_{112}$&=&[$\mathsf{24};\mathsf{012}$]&$\frac{ - 1}{24}$ 
\\{}
$(S_{\!\!f})_{211}$&=&[$\mathsf{04};\mathsf{012}$]&$\frac{1}{24}$ &
$(I_g)_{211}$&=&[$\mathsf{10};\mathsf{032}$]&$\frac{ - 1}{48}$ &
$(I_h)_{211}$&=&[$\mathsf{20};\mathsf{013}$]&$\frac{ - 1}{48}$ 
\\{}
$(I_f)_{111}$&=&[$\mathsf{04};\mathsf{312}$]&$\frac{1}{48}$ &
$(I_h)_{111}$&=&[$\mathsf{24};\mathsf{013}$]&$\frac{ - 1}{48}$ &
$(I_g)_{111}$&=&[$\mathsf{14};\mathsf{032}$]&$0$ 
\\{}
$(S_{\!g})_{111}$&=&[$\mathsf{14};\mathsf{012}$]&$0$ &
$(I_f)_{121}$&=&[$\mathsf{01};\mathsf{312}$]&$\frac{1}{24}$ &
$(I_h)_{121}$&=&[$\mathsf{21};\mathsf{013}$]&$\frac{ - 1}{24}$ 
\\
\hline
\end{tabular}%
}
\end{table}

Here we let by definition 
\[
I_f \mathrel{{:}{=}} 
\dd_j\bigl( \Jac(\cP)(\cP^{ij}, g, h) \bigr)\,\dd_i f =
\text{\raisebox{-20pt}{
\unitlength=0.70mm
\linethickness{0.4pt}
\begin{picture}(26.00,16.33)
\put(13.00,5.00){\line(1,0){13.00}}
\put(1,-3){\line(3,2){12.00}}
\put(2.00,5.00){\circle*{1.33}}
\put(13.00,5.00){\circle*{1.33}}
\put(24.00,5.00){\circle*{1.33}}
\put(7.33,11.33){\circle*{1.33}}
\put(7.33,11.33){\vector(1,-1){5.5}}
\put(7.33,11.33){\vector(-1,-1){5.5}}
\put(13,17){\circle*{1.33}}
\put(13,17){\vector(1,-1){11.2}}
\put(13,17){\vector(-1,-1){5.1}}
\put(13,10.3){\oval(30,20)}
\put(2,5.0){\vector(0,-1){7.1}}
\qbezier(2,5)(-8,-2)(-2,9)
\put(-2.6,8.0){\vector(1,2){0.5}}
\put(2,-2){\circle*{1.33}}
\put(-7,3){\scriptsize $j$}
\end{picture}
}}
\ 
{-}
\text{\raisebox{-20pt}{
\unitlength=0.70mm
\linethickness{0.4pt}
\begin{picture}(26.00,16.33)
\put(13.00,5.00){\line(1,0){13.00}}
\put(1,-3){\line(3,2){12.00}}
\put(2.00,5.00){\circle*{1.33}}
\put(13.00,5.00){\circle*{1.33}}
\put(24.00,5.00){\circle*{1.33}}
\put(13,11.33){\circle*{1.33}}
\put(13,11.33){\vector(2,-1){10.8}}
\put(13,11.33){\vector(-2,-1){10.8}}
\put(18.5,17){\circle*{1.33}}
\put(18.5,17){\vector(-1,-1){5.2}}
\put(18.5,17){\vector(-1,-2){5.6}}
\put(13,10.3){\oval(30,20)}
\put(2,5.0){\vector(0,-1){7.1}}
\qbezier(2,5)(-8,-2)(-2,9)
\put(-2.6,8.0){\vector(1,2){0.5}}
\put(2,-2){\circle*{1.33}}
\put(17,12){\tiny $R$}
\put(-7,3){\scriptsize $j$}
\end{picture}
}}
\ 
{-}
\text{\raisebox{-20pt}{
\unitlength=0.70mm
\linethickness{0.4pt}
\begin{picture}(26.00,16.33)
\put(13.00,5.00){\line(1,0){13.00}}
\put(1,-3){\line(3,2){12.00}}
\put(2.00,5.00){\circle*{1.33}}
\put(13.00,5.00){\circle*{1.33}}
\put(24.00,5.00){\circle*{1.33}}
\put(18.33,11.33){\circle*{1.33}}
\put(18.33,11.33){\vector(1,-1){5.5}}
\put(18.33,11.33){\vector(-1,-1){5.5}}
\put(13,17){\circle*{1.33}}
\put(13,17){\vector(-1,-1){11.2}}
\put(13,17){\vector(1,-1){5.1}}
\put(13,10.3){\oval(30,20)}
\put(2,5.0){\vector(0,-1){7.1}}
\qbezier(2,5)(-8,-2)(-2,9)
\put(-2.6,8.0){\vector(1,2){0.5}}
\put(2,-2){\circle*{1.33}}
\put(-7,3){\scriptsize $j$}
\end{picture}
}} = 0.
\]
Likewise, 
$I_g \mathrel{{:}{=}} \partial_j\bigl(\Jac(\cP)(f,\cP^{ij},h)\bigr) \cdot\partial_i g$ and $I_h \mathrel{{:}{=}} \partial_j\bigl(\Jac(\cP)(f,g,\cP^{ij})\cdot\partial_i h$, respectively.\footnote{%
In~\cite{sqs15}, the indices~$i$ and~$j$ were interchanged in the definitions of both~$I_g$ and~$I_h$ (compare the expression of~$I_f$);
that typo is now corrected in the above formulae.%
}

We also set
\[
S_f \mathrel{{:}{=}}
\cP^{ij} \dd_j \Jac(\cP)(\dd_i f, g, h) =
\text{\raisebox{-16pt}{
\unitlength=0.70mm
\linethickness{0.4pt}
\begin{picture}(26.00,16.33)
\put(-6, 13){\scriptsize $i$}
\put(0.00,5.00){\line(1,0){26.00}}
\put(2.00,5.00){\circle*{1.33}}
\put(13.00,5.00){\circle*{1.33}}
\put(24.00,5.00){\circle*{1.33}}
\put(7.33,11.33){\circle*{1.33}}
\put(7.33,11.33){\vector(1,-1){5.5}}
\put(7.33,11.33){\vector(-1,-1){5.5}}
\put(13,17){\circle*{1.33}}
\put(13,17){\vector(1,-1){11.2}}
\put(13,17){\vector(-1,-1){5.1}}
\put(13,10.3){\oval(30,20)}
\put(-5.7,19.0){\circle*{1.33}}
\put(-5.7,19.0){\vector(1,-2){6.8}}
\put(-5.7,19.0){\vector(2,-1){5.1}}
\end{picture}
}}
\ 
{-}
\text{\raisebox{-16pt}{
\unitlength=0.70mm
\linethickness{0.4pt}
\begin{picture}(26.00,16.33)
\put(-6, 13){\scriptsize $i$}
\put(0.00,5.00){\line(1,0){26.00}}
\put(2.00,5.00){\circle*{1.33}}
\put(13.00,5.00){\circle*{1.33}}
\put(24.00,5.00){\circle*{1.33}}
\put(13,11.33){\circle*{1.33}}
\put(13,11.33){\vector(2,-1){10.8}}
\put(13,11.33){\vector(-2,-1){10.8}}
\put(18.5,17){\circle*{1.33}}
\put(18.5,17){\vector(-1,-1){5.2}}
\put(18.5,17){\vector(-1,-2){5.6}}
\put(13,10.3){\oval(30,20)}
\put(-5.7,19.0){\circle*{1.33}}
\put(-5.7,19.0){\vector(1,-2){6.8}}
\put(-5.7,19.0){\vector(2,-1){5.1}}
\put(13,15){\tiny $L$}
\put(17,12){\tiny $R$}
\end{picture}
}}
\ 
{-}
\text{\raisebox{-16pt}{
\unitlength=0.70mm
\linethickness{0.4pt}
\begin{picture}(26.00,16.33)
\put(-6, 13){\scriptsize $i$}
\put(0.00,5.00){\line(1,0){26.00}}
\put(2.00,5.00){\circle*{1.33}}
\put(13.00,5.00){\circle*{1.33}}
\put(24.00,5.00){\circle*{1.33}}
\put(18.33,11.33){\circle*{1.33}}
\put(18.33,11.33){\vector(1,-1){5.5}}
\put(18.33,11.33){\vector(-1,-1){5.5}}
\put(13,17){\circle*{1.33}}
\put(13,17){\vector(-1,-1){11.2}}
\put(13,17){\vector(1,-1){5.1}}
\put(13,10.3){\oval(30,20)}
\put(-5.7,19.0){\circle*{1.33}}
\put(-5.7,19.0){\vector(1,-2){6.8}}
\put(-5.7,19.0){\vector(2,-1){5.1}}
\end{picture}
}} = 0.
\]
Similarly, we let $S_g \mathrel{{:}{=}} \cP^{ij} \dd_j 
\Jac(\cP)(f, \dd_i 
g, h) 
= 0$ and $S_h \mathrel{{:}{=}} \cP^{ij} \dd_j 
\Jac(\cP)(f, g, \dd_i 
h) = 0$.
Note that after all the Leibniz rules are reworked, each of the six graphs~$I_f$, $\ldots$, $S_h$ --\,with the Jacobiator $\Jac(\cP)=\tfrac{1}{2}\lshad\cP,\cP\rshad$ at the tri\/-\/valent vertex\,-- splits into several homogeneous components, like $(I_f)_{111}$ or~$(S_h)_{212}$; taken alone, each of the components encodes a zero polydifferential operator of respective orders.

\begin{claim}\label{ClaimWeightsInSQS15}
Multiplied by a common factor~$\bigl(\lshad\cP,\cP\rshad/\Jac(\cP)\bigr)\cdot 2^{k-1} = 2\cdot 4=8$, the Leibniz graph weights from Table
~\ref{FigLeibniz3} at~$\hbar^3$ 
fully reproduce the factorization 
which was found in the main Claim in~\cite{sqs15}, namely\textup{:}
\begin{align*}
\mathsf{A}^{(3)}_{221} &=\tfrac{2}{3} (S_f)_{221}, \qquad
\mathsf{A}^{(3)}_{122} = \tfrac{2}{3} (S_g)_{122}, \qquad
\mathsf{A}^{(3)}_{212} = -\tfrac{2}{3} (S_h)_{212}, \\
\mathsf{A}^{(3)}_{111} &= \tfrac{1}{6} (I_f - I_h)_{111}, \qquad
\mathsf{A}^{(3)}_{112} = \bigl(\tfrac{1}{6}I_f + \tfrac{1}{6}I_g - \tfrac{1}{3}S_h\bigr)_{112}, \\
\mathsf{A}^{(3)}_{121} &= \tfrac{1}{3} (I_f - I_h)_{121}, \qquad
\mathsf{A}^{(3)}_{211} = \bigl(\tfrac{1}{3}S_f - \tfrac{1}{6}I_g - \tfrac{1}{6}I_h\bigr)_{211}.
\end{align*}
\end{claim}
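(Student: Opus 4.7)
The plan is to reduce the statement to a finite, case\/-\/by\/-\/case arithmetic verification using the coefficient formula derived in~\S\ref{SecCoeffLeibniz}, and then to compare the scaled weights of Table~\ref{FigLeibniz3} with the trigraded components $\mathsf{A}^{(3)}_{ijk}$ recorded in~\cite{sqs15}.

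First, I specialize the coefficient formula from~\S\ref{SecCoeffLeibniz} to $n=3$: with $c_3 = 3/6 = 1/2$, the operator $\mathcal{U}_\Gamma\bigl(\Jac(\cP),\cP\bigr)$ attached to a Leibniz graph $\Gamma \in G_{2,3}$ enters $\operatorname{coeff}(\Diamond,\hbar^3)$ with the scalar $2^3 \cdot w_\Gamma/\#\!\Aut(\Gamma) = 8\, w_\Gamma/\#\!\Aut(\Gamma)$. Since the unique $3$-valent internal vertex (bearing $\Jac(\cP)$) is distinguishable from the $2$-valent one (bearing $\cP$), and the three sinks are ordered, we have $\#\!\Aut(\Gamma) = 1$ throughout; hence the universal prefactor collapses to~$8$, which is precisely the factor $(\lshad\cP,\cP\rshad/\Jac(\cP))\cdot 2^{k-1}$ with $k=3$ announced in the Claim.

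Next, I apply Remark~\ref{RemSplitJac} to expand each macroscopic Leibniz graph $X \in \{I_f, I_g, I_h, S_f, S_g, S_h\}$ into its homogeneous pieces $(X)_{ijk}$ graded by the differential orders $(i,j,k)$ at the sinks $(f,g,h)$. The entries of Table~\ref{FigLeibniz3} are then redistributed along this trigrading, each weight multiplied by $8$, and the result compared with the formula for $\mathsf{A}^{(3)}_{ijk}$ one stratum at a time. For instance, at $(2,1,1)$ the three entries $(S_f)_{211}, (I_g)_{211}, (I_h)_{211}$ with tabulated weights $\tfrac{1}{24}, -\tfrac{1}{48}, -\tfrac{1}{48}$ scale to $\tfrac{1}{3}, -\tfrac{1}{6}, -\tfrac{1}{6}$, reproducing $\tfrac{1}{3} S_f - \tfrac{1}{6} I_g - \tfrac{1}{6} I_h$ as announced; the six remaining trigraded strata $(2,2,1), (1,2,2), (2,1,2), (1,1,2), (1,1,1), (1,2,1)$ are handled in exactly the same way.

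The main obstacle is one of bookkeeping rather than substance: one has to keep sign and labeling conventions aligned between the tabulated weights $w_\Gamma$ (as produced by~\cite{BanksPanzerPym1812} or extracted from the cyclic relations of~\cite{FelderWillwacher2008}) and the definitions of $I_f, I_g, I_h, S_f, S_g, S_h$ taken from~\cite{sqs15}, where the indices $i,j$ in the original formulae for $I_g$ and $I_h$ must be transposed, as already flagged in the footnote preceding the Claim. Once this normalization is in place, and once Remark~\ref{RemSplitJac} is applied uniformly so that the internal edge of every $\Jac(\cP)$ is marked Left, nothing remains beyond the term\/-\/by\/-\/term arithmetic illustrated above.
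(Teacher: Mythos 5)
Your proposal is correct and follows essentially the same route as the paper: specialize the coefficient formula of \S\ref{SecCoeffLeibniz} to $n=3$ (where all automorphism groups are trivial, so the universal prefactor collapses to $2^3=8$), expand each Leibniz graph by the algorithm of Remark~\ref{RemSplitJac}, and compare stratum by stratum in the trigrading. The one step the paper makes explicit that your sketch glosses over is that the comparison with $\mathsf{A}^{(3)}_{ijk}$ is carried out at the level of the resulting $39$ weighted Kontsevich wedge graphs (listed in Appendix~\ref{AppIfShIntoK}), reduced modulo skew-symmetry at internal vertices --- which is where cancellations such as the one in Example~\ref{ExA121} arise --- rather than by merely matching the scalars $8\,w_\Gamma$ against the coefficients announced in~\cite{sqs15}.
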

\noindent%
Otherwise speaking, the sum of these Leibniz oriented graphs with these weights (times $2\cdot4=8$),
when expanded into the sum of $39$~weighted Kontsevich graphs (built only of wedges), equals identically the $\hbar^3$-\/proportional term in the associator~$\Assoc(\star)(\cP)(f,g,h)$.

\begin{proof}[
Proof scheme]
The encodings of weighted Kontsevich\/-\/graph expansions of the homogeneous components of the weighted Leibniz graphs $I_f$, $\ldots$, $S_h$, which show up in the associator at~$\hbar^3$ and which are processed according to the algorithm in Remark~\ref{RemSplitJac}, are listed in Appendix~\ref{AppIfShIntoK}.
Reducing that collection modulo skew symmetry at internal vertices, we reproduce, as desired, the entire term~$\mathsf{A}^{(3)}$ in the expansion~$\hbar^2\cdot\mathsf{A}^{(2)}+
\hbar^3\cdot\mathsf{A}^{(3)}+\bar{o}(\hbar^3)$ of the associator
$\Assoc(\star)(\cP)$ mod~$\bar{o}(\hbar^3)$.
\end{proof}

Three examples, corresponding to the leftmost column of equalities in Claim~\ref{ClaimWeightsInSQS15}, illustrate this scheme at order~$\hbar^3$. The three cases differ in that for~$\mathsf{A}^{(3)}_{221}$ in Example~\ref{ExA221}, there is just one Leibniz graph without any arrows acting on the Jacobiator vertex.
In the other Example~\ref{ExA121} for~$\mathsf{A}^{(3)}_{121}$, there are two Leibniz graphs still without Leibniz\/-\/rule actions on the Jacobiators in them, so that we aim to show how similar terms are collected.\footnote{%
To collect and compare the Kontsevich orgraphs (built of wedges, i.e.\ ordered edge pairs issued from internal vertices), we can bring every such graph to its normal form, that is, represent it using the \emph{minimal} base\/-\/($\#$\,sinks $+$ $\#$\,internal vertices) number, encoding the graph as the list of ordered pairs of target vertices, by running over all the relabellings of internal vertices. (The labelling of ordered sinks is always $\mathsf{0}\prec\mathsf{1}\prec\ldots\prec\mathsf{m-1}$.)}
Finally, in Example~\ref{ExA111} about~$\mathsf{A}^{(3)}_{111}$ there are two Leibniz graphs with one Leibniz rule action per either graph: an arrow targets the two internal vertices in the Jacobiator.

\begin{example}\label{ExA221}
Take the Leibniz graph $(S_{\!\!f})_{221} = [\mathsf{01};\mathsf{012}]$. 
Its weight is $1/12$.
Multiplying the Leibniz graph by $8$ times its weight and expanding the Jacobiator (there are no Leibniz rules to expand) yields the sum of three Kontsevich graphs: $\frac{2}{3}\big([\mathsf{01};\mathsf{01};\mathsf{42}] + [\mathsf{01};\mathsf{12};\mathsf{40}] + [\mathsf{01};\mathsf{20};\mathsf{41}]\big)$.
This is identically equal to the differential order $(2,2,1)$ homogeneous part $\mathsf{A}^{(3)}_{221}$ of $\Assoc(\star)(\cP)$ at~$\hbar^3$.
For instance, these terms are listed in~\cite[App.~D]{cpp}.
\end{example}

\begin{example}\label{ExA121}
Take the Leibniz graphs $(I_f)_{121} = [\mathsf{01};\mathsf{312}]$ and $(I_h)_{121} = [\mathsf{21};\mathsf{013}]$.
Their weights are $1/24$ and $-1/24$, respectively; multiply them by~$8$.
Expanding the Jacobiator in the linear combination $\tfrac{1}{3} (I_f - I_h)_{121}$ yields the sum of Kontsevich graphs $\frac{1}{3}\big([\mathsf{01};\mathsf{31};\mathsf{42}] + [\mathsf{01};\mathsf{12};\mathsf{43}] + [\mathsf{01};\mathsf{23};\mathsf{41}] - [\mathsf{21};\mathsf{01};\mathsf{43}] - [\mathsf{21};\mathsf{13};\mathsf{40}] - [\mathsf{21};\mathsf{30};\mathsf{41}]\big)$.
The two Leibniz graphs have a Kontsevich graph in common: $[\mathsf{01};\mathsf{12};\mathsf{43}] = [\mathsf{21};\mathsf{01};\mathsf{43}]$ (recall that internal vertex labels can be permuted at no cost and the swap $L \rightleftarrows R$ at a wedge costs a minus sign).
This gives one cancellation; the remaining four terms equal $\mathsf{A}^{(3)}_{121}$ as listed in~\cite[App.~D]{cpp}.
\end{example}

\begin{example}\label{ExA111}
Take the Leibniz graphs $(I_f)_{111}=[\mathsf{04};\mathsf{312}]$ and $(I_h)_{111}=[\mathsf{24};\mathsf{013}]$.
Their weights are $1/48$ and $-1/48$, respectively; multiply them by~$8$.
Expanding the Jacobiator and the Leibniz rule in the linear combination $\tfrac{1}{6} (I_f - I_h)_{111}$ yields the sum of Kontsevich graphs:
\begin{multline*}
\tfrac{1}{6}\big(
[\mathsf{04};\mathsf{31};\mathsf{42}]+
[\mathsf{04};\mathsf{12};\mathsf{43}]+
[\mathsf{04};\mathsf{23};\mathsf{41}]+
[\mathsf{05};\mathsf{31};\mathsf{42}]+
[\mathsf{05};\mathsf{12};\mathsf{43}]+
[\mathsf{05};\mathsf{23};\mathsf{41}]\\
{}-[\mathsf{24};\mathsf{01};\mathsf{43}]
-[\mathsf{24};\mathsf{13};\mathsf{40}]
-[\mathsf{24};\mathsf{30};\mathsf{41}]
-[\mathsf{25};\mathsf{01};\mathsf{43}]
-[\mathsf{25};\mathsf{13};\mathsf{40}]
-[\mathsf{25};\mathsf{30};\mathsf{41}]
\big).
\end{multline*}
Two pairs of graphs cancel; namely $[\mathsf{05};\mathsf{31};\mathsf{42}] = [\mathsf{25};\mathsf{30};\mathsf{41}]$ and $[\mathsf{05};\mathsf{23};\mathsf{41}] = [\mathsf{25};\mathsf{13};\mathsf{40}]$.
The remaining eight terms equal $\mathsf{A}^{(3)}_{111}$ as listed in~\cite[App.~D]{cpp}.
\end{example}

\subsection{The order~$\hbar^4$}
\label{ExLeibniz4}
Let us proceed with the term~$\mathsf{A}^{(4)}$ at~$\hbar^4$ in the associator $\Assoc(\star)(\cP)(\cdot,\cdot,\cdot)$ mod~$\bar{o}(\hbar^4)$.
The numbers of Kontsevich oriented graphs in the star\/-\/product expansion grow as fast as
\begin{multline*}
\star = \hbar^0 \cdot (\#\text{graphs} = 1) + \hbar^1 \cdot (\# = 1) + \hbar^2 \cdot (\# = 4) + \hbar^3 \cdot (\# = 13) + \hbar^4 \cdot (\# = 247) + \\ 
+ \hbar^5 \cdot (\# = 2356) + \hbar^6 \cdot (\# = 66041) + \bar{o}(\hbar^6);
\end{multline*}
here we report the count of all nonzero\/-\/weight Kontsevich oriented graphs.
Counting them modulo automorphisms (which may also swap the sinks), Banks, Panzer, and Pym obtain the numbers $(\hbar^0: 1$, $\hbar^1: 1$, $\hbar^2: 3$, $\hbar^3: 8$, $\hbar^4: 133$, $\hbar^5: 1209$, $\hbar^6: 33268)$.
This shows that at orders~$\hbar^{k \geqslant 4}$, the use of graph\/-\/processing software is indispensible in the task of verifying factorization~\eqref{EqDiamondAssoc} using weighted graph expansion~\eqref{EqDiamondF} of the operator~$\Diamond$.


Specifically, the number of Kontsevich oriented graphs at~$\hbar^k$ in the left\/-\/hand side of the factorization problem $\Assoc(\star)(\cP)(\cdot,\cdot,\cdot) = \Diamond\bigl(\cP$, $\schouten{\cP,\cP}\bigr)(\cdot,\cdot,\cdot)$, and the number of Leibniz graphs which assemble with nonzero coefficients to a solution~$\Diamond$ in the right\/-\/hand side is presented in Table~\ref{TabNumGraphsInAssoc}.
\begin{table}[htb]
\caption{
Number of graphs in either side of the factorization.}%
\label{TabNumGraphsInAssoc}
\begin{tabular}{l r r r r r r}
\hline
$k$ & 2 & 3 & 4 & 5 & 6 & 7 \\
\hline
LHS: $\#$ K. orgraphs & 3\:(Jac) & 39 & 740 & 12464 & 290305 & ? \\
RHS: $\#$ L. orgraphs,  & 1\:(Jac) & 13 & 
241 
& ? & ? & ? \\
\phantom{RHS: } $\text{coeff} \neq 0$ & & & & \multicolumn{2}{c}{\upbracefill} \\
Reference & \S\ref{SecCoeffLeibniz}, \cite{KontsevichFormality} & \S\ref{ExLeibniz3}, \cite{sqs15} & \S\ref{ExLeibniz4}, \cite{cpp} & \multicolumn{2}{c}{\cite{BanksPanzerPym1812}}\\
\hline
\end{tabular}
\end{table}
At~$\hbar^4$, the expansion of $\Assoc(\star)(\cP)$ mod~$\bar{o}(\hbar^4)$ requires~$241$ 
nonzero coefficients of Leibniz graphs on $3$~sinks, $2=n-1$~internal vertices for bi\/-\/vectors~$\cP$ and one internal vertex for the tri-vector $\schouten{\cP,\cP}$, and therefore, $2(n-1)+3 = 2n+3-2=7$ oriented edges.

\begin{rem}
Again, 
this set of Leibniz graphs is well structured. Indeed, it is a disjoint union of homogeneous differential operators arranged according to their differential orders w.r.t.\ the sinks, e.g., $(1,1,1)$, $(2,1,1)$, $(1,2,1)$, $(1,1,2)$, etc., up to~$(3,3,1)$.
\end{rem}

\begin{example}\label{ExA331}
The Leibniz graph $L_{331}\mathrel{{:}{=}}
[\mathsf{01};\mathsf{01};\mathsf{012}]$ 
of differential orders $(3,3,1)$ has the weight $1/24$ according to~\cite{BanksPanzerPym1812}.
Multiplied by a universal (for all graphs at~$\hbar^4$) factor $2^4 = 16$ and the factor $1/(\# \operatorname{Aut}(L_{331})) = 1/2$ due to this graph's symmetry $(3 \rightleftarrows 4)$, it expands to $\tfrac{1}{3}\big([\mathsf{01};\mathsf{01};\mathsf{01};\mathsf{52}] + [\mathsf{01};\mathsf{01};\mathsf{12};\mathsf{50}] + [\mathsf{01};\mathsf{01};\mathsf{20};\mathsf{51}]\big)$ by the definition of Jacobi's identity.
This sum of three weighted Kontsevich orgraphs reproduces exactly $\mathsf{A}^{(4)}_{331}$, which is known from~\cite[Table~8 in App.~D]{cpp}.
\end{example}

\begin{example}\label{ExA322}
The Leibniz graph $L_{322}\mathrel{{:}{=}}
[\mathsf{01};\mathsf{02};\mathsf{012}]$
of differential orders $(3,2,2)$ has the weight $1/24$ according to~\cite{BanksPanzerPym1812}.
Multiplied now by a universal (for all graphs at~$\hbar^4$) factor $2^4 = 16$ and the factor $1/(\# \operatorname{Aut}(L_{322})) = 1$, it expands to $\tfrac{2}{3}\big([\mathsf{01};\mathsf{02};\mathsf{01};\mathsf{52}] + [\mathsf{01};\mathsf{02};\mathsf{12};\mathsf{50}] + [\mathsf{01};\mathsf{02};\mathsf{20};\mathsf{51}]\big)$.
This sum reproduces 
$\mathsf{A}^{(4)}_{322}$
(again, see
~\cite[Table~8 in App.~D]{cpp}).
\end{example}

\begin{example}\label{ExA132}
Consider at the differential order $(1,3,2)$ at $\hbar^4$ the three Leibniz graphs $L_{132}^{(1)}\mathrel{{:}{=}}[\mathsf{12};\mathsf{13};\mathsf{012}]$, $L_{132}^{(2)}\mathrel{{:}{=}}[\mathsf{12};\mathsf{12};\mathsf{014}]$, and $L_{132}^{(3)}\mathrel{{:}{=}}[\mathsf{12};\mathsf{01};\mathsf{412}]$.
They have no symmetries, i.e.\ their automorphism groups are one\/-\/element,
and their weights are $W(L_{132}^{(1)})=1/72$, $W(L_{132}^{(2)})=1/48$, and $W(L_{132}^{(3)})=1/48$, respectively.
Pre\/-\/multiplied by their weights and 
universal factor $2^4 = 16$, these Leibniz graphs expand to
\begin{align*}
\tfrac{2}{9}&\big([\mathsf{12};\mathsf{13};\mathsf{01};\mathsf{52}] + [\mathsf{12};\mathsf{13};\mathsf{12};\mathsf{50}] + [\mathsf{12};\mathsf{13};\mathsf{20};\mathsf{51}]\big) \\
&\quad{}
+\tfrac{1}{3}\big([\mathsf{12};\mathsf{12};\mathsf{01};\mathsf{54}] + [\mathsf{12};\mathsf{12};\mathsf{14};\mathsf{50}] + [\mathsf{12};\mathsf{12};\mathsf{40};\mathsf{51}]\big) \\
&\quad{}
+\tfrac{1}{3}\big([\mathsf{12};\mathsf{01};\mathsf{41};\mathsf{52}] + [\mathsf{12};\mathsf{01};\mathsf{12};\mathsf{54}] + [\mathsf{12};\mathsf{01};\mathsf{24};\mathsf{51}]\big).
\end{align*}
There is one cancellation, since $[\mathsf{12};\mathsf{01};\mathsf{12};\mathsf{54}] = -[\mathsf{12};\mathsf{12};\mathsf{01};\mathsf{54}]$.
The remaining seven terms reproduce exactly $\mathsf{A}^{(4)}_{132}$; that component is known from~\cite[Table~8 in App.~D]{cpp}.

Actually, there was another Leibniz graph at this homogeneity order,
$L_{132}^{(4)}\mathrel{{:}{=}} [\mathsf{12};\mathsf{15};\mathsf{012}]$, but its weight is zero
and hence it does not contribute.
(Indeed, we get an independent verification of this by having already balanced the entire homogeneous component at differential orders $(1,3,2)$ in the associator.)
\end{example}


\subsection*{Intermediate conclusion}

We have experimentally found the constants $c_k$ in Corollary~\ref{CorDiamondF} which balance the Kontsevich graph expansion of the $\hbar^k$-\/term $\mathsf{A}^{(k)}$ in the associator
against an expansion of the respective term at~$\hbar^k$ in the r.\/-\/h.s.\ of~\eqref{EqDiamondAssoc} using the weighted Leibniz graphs.
Namely, we conjecture $c_k = k/6$ in \S\ref{SecCoeffLeibniz}.
The origin of these constants, in particular how they arise from the sum over $i < j$ in the $L_\infty$ condition~\eqref{EqLinfty} (perhaps, in combination with different normalizations of the objects which we consider) still remains to be explained, similar 
to the reasoning in~\cite{ArnalManchonMasmoudi,WillwacherCalaque} where the signs are fixed.
Note that both in the associator, which is quadratic w.r.t.\ the weights of Kontsevich graphs in~$\star$, and in the operator~$\Diamond$, which is linear in the Kontsevich weights of Leibniz graphs, the weight values are provided simultaneously, by using identical techniques (for instance, from~\cite{BanksPanzerPym1812}).
Indeed, the weights are provided by the integral formula which is universal with respect to all the graphs under study~\cite{KontsevichFormality}.


\appendix
\section{Encodings of weighted Kontsevich\/-\/graph expansions for $(p,q,r)$-\/homogeneous components $(I_f,\ldots,S_h)_{pqr}$}
\label{AppIfShIntoK}
\noindent
\begin{verbatim}
# 2/3 (S_f)_{221}
3 3 1   0 1 0 1 4 2    2/3
3 3 1   0 1 1 2 4 0    2/3
3 3 1   0 1 2 0 4 1    2/3
# 2/3 (S_g)_{122}
3 3 1   1 2 0 1 4 2    2/3
3 3 1   1 2 1 2 4 0    2/3
3 3 1   1 2 2 0 4 1    2/3
# -2/3 (S_h)_{212}
3 3 1   2 0 0 1 4 2    -2/3
3 3 1   2 0 1 2 4 0    -2/3
3 3 1   2 0 2 0 4 1    -2/3
# 1/6 (I_f)_{111}
3 3 1   0 4 3 1 4 2    1/6
3 3 1   0 4 1 2 4 3    1/6
3 3 1   0 4 2 3 4 1    1/6
3 3 1   0 5 3 1 4 2    1/6
3 3 1   0 5 1 2 4 3    1/6
3 3 1   0 5 2 3 4 1    1/6
# -1/6 (I_h)_{111}
3 3 1   2 4 0 1 4 3    -1/6
3 3 1   2 4 1 3 4 0    -1/6
3 3 1   2 4 3 0 4 1    -1/6
3 3 1   2 5 0 1 4 3    -1/6
3 3 1   2 5 1 3 4 0    -1/6
3 3 1   2 5 3 0 4 1    -1/6
# 1/6 (I_f)_{112}
3 3 1   0 2 3 1 4 2    1/6
3 3 1   0 2 1 2 4 3    1/6
3 3 1   0 2 2 3 4 1    1/6
# 1/6 (I_g)_{112}
3 3 1   1 2 0 3 4 2    1/6
3 3 1   1 2 3 2 4 0    1/6
3 3 1   1 2 2 0 4 3    1/6
# -1/3 (S_h)_{112}
3 3 1   2 4 0 1 4 2    -1/3
3 3 1   2 4 1 2 4 0    -1/3
3 3 1   2 4 2 0 4 1    -1/3
3 3 1   2 5 0 1 4 2    -1/3
3 3 1   2 5 1 2 4 0    -1/3
3 3 1   2 5 2 0 4 1    -1/3
# 1/3 (I_f)_{121}
3 3 1   0 1 3 1 4 2    1/3
3 3 1   0 1 1 2 4 3    1/3
3 3 1   0 1 2 3 4 1    1/3
# -1/3 (I_h)_{121}
3 3 1   2 1 0 1 4 3    -1/3
3 3 1   2 1 1 3 4 0    -1/3
3 3 1   2 1 3 0 4 1    -1/3
# 1/3 (S_f)_{211}
3 3 1   0 4 0 1 4 2    1/3
3 3 1   0 4 1 2 4 0    1/3
3 3 1   0 4 2 0 4 1    1/3
3 3 1   0 5 0 1 4 2    1/3
3 3 1   0 5 1 2 4 0    1/3
3 3 1   0 5 2 0 4 1    1/3
# -1/6 (I_g)_{211}
3 3 1   1 0 0 3 4 2    -1/6
3 3 1   1 0 3 2 4 0    -1/6
3 3 1   1 0 2 0 4 3    -1/6
# -1/6 (I_h)_{211}
3 3 1   2 0 0 1 4 3    -1/6
3 3 1   2 0 1 3 4 0    -1/6
3 3 1   2 0 3 0 4 1    -1/6
\end{verbatim}

\subsubsection*{Acknowledgements}
The first author thanks the Organisers of international workshop 
`Symmetries \&\ integrability of equations of Mathematical Physics'
({22--24}~December \textup{2018}, IM NASU Kiev, Ukraine)
for helpful discussions and warm atmosphere during the meeting.
A part of this research was done while RB was visiting at~RUG and AVK was visiting at JGU~Mainz (supported by IM~JGU via project 5020 and JBI~RUG project~106552).
The research of AVK~is supported by the $\smash{\text{IH\'ES}}$ (partially, by the Nokia Fund). 

\end{document}